\begin{document}
\title*{Estimating the finite-time ruin probability of a surplus with a long memory via Malliavin calculus}
\titlerunning{\sc Estimating ruin probability via Malliavin calculus}
% Use \titlerunning{Short Title} for an abbreviated version of
% your contribution title if the original one is too long
\author{Shota Nakamura and Yasutaka Shimizu}
% Use \authorrunning{Short Title} for an abbreviated version of
% your contribution title if the original one is too long
\institute{Shota Nakamura \at Graduate School of Fundamental Science and Engineering, Waseda University
3-4-1, Shinjuku, Okubo, Tokyo, 169-8555, Japan, \email{nakamurashota@akane.waseda.jp}
\and Yasutaka Shimizu \at Department of Applied Mathematics, Waseda University
3-4-1, Shinjuku, Okubo, Tokyo, 169-8555, Japan,  \email{shimizu@waseda.jp}}
%
% Use the package "url.sty" to avoid
% problems with special characters
% used in your e-mail or web address
%
\maketitle

\begin{abstract}
\ We consider a surplus process of drifted fractional Brownian motion with the Hurst index $H>1/2$, which appears as a functional limit of drifted compound Poisson risk models with correlated claims. This is a kind of representation of a surplus with a long memory. 
Our interest is to construct confidence intervals of the ruin probability of the surplus when the volatility parameter is unknown. 
We will obtain the derivative of the ruin probability w.r.t. the volatility parameter via  Malliavin calculus, and apply the delta method to identify the asymptotic distribution of an estimated ruin probability.
\begin{flushleft}
{\it Keywords:}  Finite-time ruin probability; long memory surplus; fractional Brownian motion; Malliavin calculus.
\vspace{1mm}\\
{\it MSC2020:} {\bf 60G22}; 60H07; 62P05.
\end{flushleft}
\end{abstract} 

\section{Introduction}
In the classical ruin theory initiated by Lundberg \cite{Lundberg}, the insurance surplus is described by a drifted compound Poisson process such as 
\begin{align}
\label{compound}
X_t=x + ct - \sum_{i=1}^{N_t} U_i
\end{align}
where $x,c>0$, $N$ is a Poisson process, and $U_i$'s are IID random variables with mean $\mu$, representing claim sizes. 
 One of the direction to extend the model is the following drifted L\'{e}vy  surplus $(R_t)_{0<t<T}$:
\begin{align}
\label{levy}
R_t=u+dt+\sigma W_t -V_t, 
\end{align}
where $u,d,\sigma >0$, $W$ is a Brownian motion and $V$ is a L\'{e}vy subordinator. The model (\ref{levy}) is a natural extension of (\ref{compound}) and considers claim sizes with stationary independent increment. Statistical inference for ruin probability based on the model (\ref{levy}) has been studied by many authors; see, e.g., Asmussen and Albrecher \cite{aa10}, Shimizu \cite{Shimizu1} and the references therein. However, such an independent assumption is often unrealistic in a certain insurance contract because large claims can be successive once a large claim has occurred. Therefore, it would be better to assume that $ (U_i)_{i\in \mathbb{N}}$ are correlated.

Michna \cite{Michna} assumes that there exists a constant $D\in (0,1)$ and a slowly varying function $L$: 
$L(tx)\sim L(x)$ as $x\to \infty$ for any $t>0$, such that 
\begin{align}
\mathbb{E}[U_iU_{i+k}] \sim k^{-D}L(k),\quad k\to \infty, 
\end{align}
under which the process $X$ has a {\it long memory}: $\sum_{k=1}^\infty Cov(U_1,U_{k}) = \infty$. 
Considering a sequence of such a long memory surplus processes $X^n=\left( X_t^n\right)_{t \in [0,T]}$ indexed by $n=1,2,\dots,$:
\begin{align*}
X_t^n=x_n+c_n t -\sum_{i=0}^{N_t^n} U_i, 
\end{align*}
where $x_n, c_n$ are positive sequences, $\left(N_t^n\right)_{t \in [0,T]}$ is a Poisson process with the intensity $n$ and  $U_i$'s are correlated random variables as above. Then, according to Theorem 3 in Michna \cite{Michna}, there exists a norming sequence $(\eta_n)_{n\in \mathbb{N}}$ and some constants $u$ and $\theta$ such that 
the process $X^n/\eta_n$ converges weakly in a functional space $D[0, \infty)$, a space of c\`{a}dl\`{a}g functions with the Skorokhood topology: 
\begin{align*}
\frac{X_t^n}{\eta_n}\xrightarrow{d} u+ \theta t -W_t^H \quad \text{in} \quad \mathcal{D}[0, \infty) \quad(n\rightarrow \infty),
\end{align*}
where $W^H$ is a fractional Brownian motion with the Hurst parameter $H\in (\frac{1}{2},1)$. In such a way, the surplus driven by a fractional Brownian motion naturally appears as a limit of a Poissonian model with a long memory. 

 Some earlier works model a surplus by fractional Brownian motions. Ji and Robert \cite{Ji} model the surplus of insurance and reinsurance companies as the two-dimensional fractional Brownian motion and derives asymptotic of the ruin probability when the initial capital tends to infinity. 
Cai and Xiao \cite{Cai} consider a drifted mixed fractional Brownian motion as a surplus model and estimate the ruin probability with an unknown drift parameter. %However, to the author's knowledge, estimation for the ruin probability of a surplus modeled by the drifted fractional Brownian motion with an unknown volatility parameter is still unclear. In the following, 
In this paper, we are interested in the following drifted fractional Brownian motion as a surplus model: 
\begin{align}
\label{model}
X_t=u+\sigma \theta t-\sigma W_t^H,
\end{align}
where $\theta>0$ and $H \in (\frac{1}{2},1)$ are known parameters and $\sigma >0$ is an unknown parameter.
Since our model is a normalized limit of a classical type surplus with a known premium rate $c_n$, the drift will be known under a suitable scaling. 
Therefore we assume $\theta$ is known although the scaling parameter $\sigma$ is unknown.

Our interest is to estimate the finite-time ruin probability: for any $T \in (0,\infty)$,
\begin{align*}
\Psi_{\sigma}(u,T):=\mathbb{P}\left( \underset{0\leq t\leq T}{\inf} X_t <0 \right).
\end{align*}
from the past surplus data. 

The paper is organized as follows: In Section 2, we prepare some notation and give a brief review of Malliavin calculus. In Section 3, we provide a result on estimating the volatility parameter $\sigma$ and the ruin probability by the delta method.
 In this procedure, the partial derivative $\frac{\partial}{\partial_{\sigma}} \Psi_{\sigma}(u,T)$ is required to obtain confidence intervals of the ruin probability, so we derive its explicit form using the integration by parts formula in Malliavin calculus in Section 4.

\section{Preliminaries}
\subsection{Notation}
We use the following notations.
\begin{itemize}
\item $A\underset{\sim}{<}B$ means that there exists a universal constant $c>0$ such that $A\leq cB$.

\item The partial derivative of the function $f$ at the point $x\in \mathbb{R}^d$ with respect to the $i$-th variable is denoted by $\partial_i f(x)$.

\item Let $H_n(\cdot)$ denote the $n$-th order \textit{Hermite polynomial}, which is defined by
\begin{align*}
H_n(x)=\frac{(-1)^n}{n!}e^{\frac{x^2}{2}}\frac{d^n}{dx^n}\left(e^{-\frac{x^2}{2}} \right), \quad (n\geq1).
\end{align*}  
\item $\mathcal{D}(A)$: the Skorokhod space on the set $A \subset \mathbb{R}_{+}$. 

\item Let $C_\uparrow^{\infty}(\mathbb{R}^n)$ be the set of all infinitely continuously differentiable functions $f : \mathbb{R}^n \rightarrow \mathbb{R}$ such that $f$ and all of its partial derivatives are of polynomial growth. %（$C_p^\infty$の$p$が定数や$L^p$などと紛らわしいので，$C_\uparrow^\infty$などを使うことが多い）.
\item Denote by $C_b^{\infty}(\mathbb{R}^n)$ the set of all infinitely continuously differentiable functions $f: \mathbb{R}^n \rightarrow \mathbb{R}$ such that  $f$ and all of its partial derivatives are bounded.
\item For any $p>1$ and $f, g \in L^p(\mathbb{R})$ we define the convolution $f*g$ as
\begin{align*}
f*g(x):=\int_{\mathbb{R}} f(x-y)g(y) dy.
\end{align*}
\item We denote the gamma function $\Gamma(\cdot)$ and the beta function $B(\cdot,\cdot)$ by 
\begin{align*}
\Gamma(x)&=\int_0^{\infty} t^{x-1}e^{t}dt, \quad (x>0)\\
B(x,y)&=\int_0^1 t^{x-1} \left(1-t \right)^{y-1} dt, \quad (x,y>0).
\end{align*}
\item We denote the left and right-sided fractional integrals $I_{a\pm}^{\alpha}f(\cdot)$ and derivatives $D_{a\pm}^{\alpha}f(\cdot)$ by
\begin{align*}
I_{a+}^{\alpha}f(x)&=\frac{1}{\Gamma(\alpha)}\int_a^x (x-y)^{\alpha-1}f(y)dy,\\
I_{b-}^{\alpha}f(x)&=\frac{1}{\Gamma(\alpha)}\int_x^b(y-x)^{\alpha-1}f(y)dy,\\
D_{a+}^{\alpha}g(x)&=\frac{1}{\Gamma(1-\alpha)} \left(\frac{g(x)}{(x-a)^{\alpha}}+\alpha \int_a^x \frac{g(x)-g(y)}{(x-y)^{\alpha+1}} dy \right),\\
D_{b-}^{\alpha} g(x)&=\frac{1}{\Gamma(1-\alpha)}\left( \frac{g(x)}{\left(b-x\right)^{\alpha}}+\alpha \int_x^b \frac{g(x)-g(y)}{(y-x)^{\alpha+1}} dy \right),
\end{align*}
for any $0<\alpha<1$, $x \in (a,b)$, $f\in L^1(a,b)$ and $g \in I_{a+}^{\alpha}(L^p)$ $\left(\text{resp.}\  g \in  I_{b-}^{\alpha}(L^p)\right)$ where $p>1$ (see Samko \textit{et al.} \cite{Samko} for details).
\end{itemize}

\subsection{Malliavin calculus}
This section briefly introduces the Maliavin calculus based on Chapters 1 and 5 in  Nualart \cite{Nualart2}.   In the sequel, we denote by $G$  a real separable Hilbert space.

\subsubsection{Malliavin calculus on a real separable Hilbert space}

\begin{definition}
We say that a stochastic process $(W_g)_{g\in G}$ is an isonormal Gaussian process associated with the real separable Hilbert space $G$ if $W$ is a centered Gaussian family of random variables such that $\mathbb{E}[W_hW_g]=\left\langle h, g \right\rangle_G$ for any $h, g \in G$.
\end{definition}
In the following, we assume that the isonormal gaussian $W(\cdot)$ is defined on the complete probability space $(\Omega, \mathcal{G}, \mathbb{P})$ where $\mathcal{G}$ is the $\sigma$-algebra generated by $W$ in this paper.
\begin{definition}
Let $\sigma$-algebra $\mathcal{G}$ be the $\sigma$-algebra generated by an isonormal Gaussian $W$. If a random variable $F : \Omega \rightarrow \mathbb{R}$ satisfies 
\begin{align}
\label{smooth}
F=f\left(W(g_1), \ldots, W(g_n) \right) \quad(f \in C_{\uparrow}^{\infty} (\mathbb{R}), g_1, \ldots, g_n \in G),
\end{align}
$F$ is called a smooth random variable, and the set of all such random variables is denoted by $\mathcal{S}_G$.
\end{definition}

\begin{definition}
The derivative of a smooth random variable $F$ of the form $(\ref{smooth})$  is the $G$-valued random variable given by
\begin{align*}
D^{G} F=\sum_{i=1}^n \partial_i f \left(W(g_1), \ldots , W(g_n) \right) g_i.
\end{align*}
\end{definition}

Since the operator $D^G$ defined in Definition 3 is a closable operator, we can extend $D^G$ as a closed operator on  $\mathbb{D}_G^{1,p} := \overline{\mathcal{S}_G}^{\|\cdot \|_{1,p}}$ where the seminorm $\| \cdot \|_{1,p}$ on $\mathcal{S}_G$ is defined by
\begin{align*}
\|F\|_{1,p} := \left(\mathbb{E}[F^p] +\mathbb{E}\left[ \|D^GF\|_G^p \right] \right)^{\frac{1}{p}}
\end{align*}
for any $p \geq 1$.

The above definitions can be exended to Hilbert-valued random variables. Consider the family $\mathcal{S}_G(V)$ of $V$-valued smooth random variables of the form
\begin{align*}
F=\sum_{i=1}^n F_i v_i \quad(v_i \in V, F_i \in \mathcal{S}_G),
\end{align*}
Define $D^GF:=\sum_{i=1}^n D^GF_i \otimes v_j$. Then $D^G$ is a closable operator from $\mathcal{S}_G(V)$ into $L^p(\Omega; G\otimes V)$ for any $p \geq 1$. Therefore, $D^G$ is a closed operator on $\mathbb{D}_G^{1,p}(V) = \overline{\mathcal{S}_G(V)}^{\| \cdot \|_{1,p,V}}$ for the seminorm $\| \cdot \|_{1,p}$ determined by 
\begin{align*}
\|F\|_{1,p,V}:= \left( \mathbb{E}\left[ \|F\|_V^p \right] + \mathbb{E}\left[ \|D^GF\|_{G\otimes V}^p \right] \right)^{\frac{1}{p}},
\end{align*}
 on $\mathcal{S}_G(V)$. In particular, we define $\mathbb{D}_G^{1,\infty}$ and $\mathbb{D_G}^{1,\infty}(V)$ by 
 \begin{align*}
\mathbb{D}_G^{1,\infty}:= \bigcap_{p=1}^{\infty} \mathbb{D}_G^{1,p}, \quad \mathbb{D_G}^{1,\infty}(V) :=\bigcap_{p=1}^{\infty}  \mathbb{D_G}^{1,p}(V).
 \end{align*}
 The following proposition is the chain rule for $D^G$.
 \begin{proposition}
 Suppose that $F=(F^1, \ldots , F^m)$ is a random vector whose componets belong to $\mathbb{D_G}^{1,\infty}$. Let $f \in C_p^{\infty}(\mathbb{R}^m)$. Then $f(F) \in \mathbb{D_G}^{1, \infty}$, and we have
 \begin{align*}
 D^G(f(F))=\sum_{i=1}^m \partial_i f(F) D^G F^i.
 \end{align*}
 \end{proposition}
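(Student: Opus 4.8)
The plan is to prove the identity first for smooth random variables, where it reduces to the ordinary chain rule in finite dimensions, and then to pass to the limit using the closedness of $D^G$ together with the polynomial-growth bounds on $f$ and the fact that membership in $\mathbb{D}_G^{1,\infty}$ supplies moments of every order.

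First I would treat the case $F^1, \ldots, F^m \in \mathcal{S}_G$. By taking the union of the generators appearing in the several representations and regarding each $f_i$ as a function that ignores the arguments not occurring in it, I can assume all the components are written over a common family $W(g_1), \ldots, W(g_n)$, say $F^i = f_i(W(g_1), \ldots, W(g_n))$ with $f_i \in C_\uparrow^{\infty}(\mathbb{R}^n)$. Then $f(F) = (f \circ \Phi)(W(g_1), \ldots, W(g_n))$ with $\Phi = (f_1, \ldots, f_m)$, and since $f \circ \Phi \in C_\uparrow^{\infty}(\mathbb{R}^n)$ the variable $f(F)$ is again smooth. Applying the definition of $D^G$ to $f \circ \Phi$ and the classical chain rule $\partial_j(f \circ \Phi) = \sum_i (\partial_i f)(\Phi)\, \partial_j f_i$ immediately yields $D^G(f(F)) = \sum_i (\partial_i f)(F)\, D^G F^i$, which is the asserted formula in the smooth case.

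Next I would remove the smoothness assumption by approximation. Since each $F^i \in \mathbb{D}_G^{1,\infty} = \bigcap_p \mathbb{D}_G^{1,p}$, I can pick sequences $F^i_k \in \mathcal{S}_G$ with $F^i_k \to F^i$ in $\mathbb{D}_G^{1,p}$ for every $p \geq 1$; writing $F_k = (F^1_k, \ldots, F^m_k)$, the smooth case gives $D^G(f(F_k)) = \sum_i (\partial_i f)(F_k)\, D^G F^i_k$. I would then verify two convergences: $f(F_k) \to f(F)$ in $L^p(\Omega)$, and $\sum_i (\partial_i f)(F_k)\, D^G F^i_k \to \sum_i (\partial_i f)(F)\, D^G F^i$ in $L^p(\Omega; G)$. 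Both rest on the mean value theorem and the polynomial growth of $f$ and its derivatives: the differences are dominated by polynomials in $|F_k|$ and $|F|$ times $|F_k - F|$ (respectively times $\|D^G F^i_k\|_G$), and Hölder's inequality with sufficiently large conjugate exponents converts the $\mathbb{D}_G^{1,q}$ convergence into the required $L^p$ convergence, all the moments involved being finite. Having established both limits, the closedness of $D^G$ on $\mathbb{D}_G^{1,p}$ forces $f(F) \in \mathbb{D}_G^{1,p}$ together with the stated identity; since this holds for every $p$, I conclude $f(F) \in \mathbb{D}_G^{1,\infty}$.

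The main obstacle is precisely this passage to the limit under mere polynomial growth of $f$ and its derivatives: unlike the Lipschitz or bounded-derivative case, neither $f(F_k) \to f(F)$ nor the product convergence is automatic from $\mathbb{D}_G^{1,p}$ convergence alone. The point to get right is the bookkeeping of Hölder exponents — one splits each product so that one factor carries the high-order polynomial, controlled by the uniform $L^r$ bounds coming from $\mathbb{D}_G^{1,\infty}$, while the other carries the convergent difference — so that the full strength of $F^i \in \mathbb{D}_G^{1,\infty}$, rather than just $\mathbb{D}_G^{1,p}$, is exactly what makes the argument close.
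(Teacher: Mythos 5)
The paper does not prove this proposition at all: it is stated as a standard preliminary imported from Nualart's monograph (Chapter 1), so there is nothing paper-specific to compare against. Your argument is correct and is essentially the canonical textbook proof — the finite-dimensional chain rule on smooth cylindrical variables written over a common family of generators, followed by approximation and the closedness of $D^G$, with the full strength of $F^i\in\mathbb{D}_G^{1,\infty}$ used exactly where you say, namely to supply the uniform moments that let H\"older's inequality absorb the polynomial growth of $f$ and its derivatives (the one small detail worth recording is that a single approximating sequence working in every $\mathbb{D}_G^{1,p}$ is obtained by a diagonal choice, using the monotonicity of the norms $\|\cdot\|_{1,p}$ in $p$).
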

 Next, we consider the divergence operator.
\begin{definition}
 The divergence operator $\delta^G$ is an unbounded operator on $L^2(\Omega ; G)$ with values in $L^2(\Omega)$ such that:
 \begin{itemize}
 \item[(1)] The domain of $\delta^G$ , denoted by Dom$\delta$, is the set of stochastic processes $u_{\cdot} \in L^2(\Omega ; G)$ such that
 \begin{align*}
 \left| \mathbb{E} \left[ \left\langle D^GF, u_{\cdot} \right\rangle_G \right] \right| \leq c(u) \| F\|_{L^2(\Omega)},
 \end{align*}
 for any $F \in \mathbb{D}_G^{1,2}$, where $c(u)$ is some constant depending on $u$.
 \item[(2)]If $u_{\cdot}$ belongs to Dom$\delta^G$, then $\delta^G(u)$ is characterized by
 \begin{align*}
 \mathbb{E}\left[ F\delta^G(u)\right] = \mathbb{E} \left[ \left\langle D^G F, u \right\rangle \right],
 \end{align*}
 for any $F\in \mathbb{D}_G^{1,2}$.
 \end{itemize}
\end{definition}
The following proposition allows us to factor out a scalar random variable in a divergence.
\begin{proposition}
Let $F \in \mathbb{D}_G^{1,2}$ and $u \in \text{Dom}\ \delta^G$ such that $Fu \in L^2(\Omega ; G)$. Then $Fu \in \text{Dom}\ \delta^G$ and it follows that
\begin{align*}
\delta^G(Fu)=F\delta^G(u)-\left\langle D^GF, u\right\rangle_G.
\end{align*}
\end{proposition}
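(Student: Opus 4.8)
The plan is to establish both assertions—that $Fu\in\mathrm{Dom}\,\delta^G$ and the displayed formula—at once, through the duality characterization of the divergence operator (Definition 4). By that characterization it suffices to exhibit a square-integrable random variable $Z$ with
\begin{align*}
\mathbb{E}\left[\left\langle D^GG,Fu\right\rangle_G\right]=\mathbb{E}\left[GZ\right]
\end{align*}
for every $G$ in a dense subclass of $\mathbb{D}_G^{1,2}$: once this holds, the bound $\left|\mathbb{E}\left[\langle D^GG,Fu\rangle_G\right]\right|\le\|Z\|_{L^2(\Omega)}\|G\|_{L^2(\Omega)}$ places $Fu$ in $\mathrm{Dom}\,\delta^G$ and forces $\delta^G(Fu)=Z$. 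I would take the natural candidate $Z:=F\delta^G(u)-\langle D^GF,u\rangle_G$, first checking that it is square integrable under the stated hypotheses, and then verify the identity on smooth test functionals $G\in\mathcal{S}_G$.

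The core of the argument is a Leibniz rule for $D^G$. For smooth $F,G$ the chain rule for $D^G$, applied to the product map $(x,y)\mapsto xy$, gives $D^G(FG)=F\,D^GG+G\,D^GF$. Pulling the scalar $F$ out of the inner product and using this identity yields
\begin{align*}
\mathbb{E}\left[\langle D^GG,Fu\rangle_G\right]
&=\mathbb{E}\left[F\langle D^GG,u\rangle_G\right]\\
&=\mathbb{E}\left[\langle D^G(FG),u\rangle_G\right]-\mathbb{E}\left[G\langle D^GF,u\rangle_G\right].
\end{align*}
Since $u\in\mathrm{Dom}\,\delta^G$ and $FG$ is an admissible test functional, the defining duality of $\delta^G(u)$ turns the first term into $\mathbb{E}\left[FG\,\delta^G(u)\right]$; combining the two terms produces exactly $\mathbb{E}\left[GZ\right]$ with $Z$ as above. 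This settles the case of smooth $F$ and $G$.

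It remains to relax the regularity of $F$ down to $F\in\mathbb{D}_G^{1,2}$, which I expect to be the main obstacle, since the chain rule above is only directly available for sufficiently regular arguments. I would choose smooth $F_n\to F$ in $\mathbb{D}_G^{1,2}$, so that $F_n\to F$ and $D^GF_n\to D^GF$ both converge in $L^2$, and pass to the limit in the identity
\begin{align*}
\mathbb{E}\left[\langle D^GG,F_nu\rangle_G\right]=\mathbb{E}\left[G\left(F_n\delta^G(u)-\langle D^GF_n,u\rangle_G\right)\right]
\end{align*}
for fixed $G\in\mathcal{S}_G$. The delicate points are the convergences $F_n\delta^G(u)\to F\delta^G(u)$ and $\langle D^GF_n,u\rangle_G\to\langle D^GF,u\rangle_G$ in $L^1$, together with the convergence of the left-hand side, which is precisely where the hypothesis $Fu\in L^2(\Omega;G)$ is used. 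Once these limits are justified, the identity extends to all $F\in\mathbb{D}_G^{1,2}$, and the duality characterization of $\delta^G$ gives both $Fu\in\mathrm{Dom}\,\delta^G$ and $\delta^G(Fu)=F\delta^G(u)-\langle D^GF,u\rangle_G$.
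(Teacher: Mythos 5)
First, note that the paper itself offers no proof of this proposition: it is quoted as background from Nualart's monograph (Proposition 1.3.3 there), so there is no in-paper argument to compare yours against. Your strategy --- the duality characterization of $\delta^G$, the Leibniz rule $D^G(FG)=F\,D^GG+G\,D^GF$ on smooth functionals, pulling the scalar $F$ out of the inner product, and a density passage in $F$ --- is exactly the standard textbook route, and the algebraic core of your computation is correct.

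There is, however, one step you defer that cannot be completed as stated: ``first checking that $Z=F\delta^G(u)-\langle D^GF,u\rangle_G$ is square integrable under the stated hypotheses.'' It is not. The hypotheses give only $F,\ \delta^G(u)\in L^2(\Omega)$ and $D^GF,\ u\in L^2(\Omega;G)$, so $F\delta^G(u)$ and $\langle D^GF,u\rangle_G$ (bounded by $\|D^GF\|_G\|u\|_G$) are merely in $L^1(\Omega)$. Your whole mechanism hinges on $Z\in L^2(\Omega)$: without it, the identity $\mathbb{E}\left[\langle D^G\Phi,Fu\rangle_G\right]=\mathbb{E}\left[\Phi Z\right]$ on smooth test functionals $\Phi$ does not yield the bound $c(Fu)\|\Phi\|_{L^2(\Omega)}$ required by Definition 4, and hence does not place $Fu$ in $\mathrm{Dom}\,\delta^G$, nor does it identify $\delta^G(Fu)$ as an $L^2$ random variable. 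This is precisely why Nualart's original statement carries the explicit proviso ``provided the right-hand side is square integrable'' --- a hypothesis which the version of the statement you were given (and hence your proof) silently drops. The fix is to add that proviso, or to impose stronger integrability on $F$ and $u$; with it, the remainder of your argument --- including the $L^1$-limits in the approximation $F_n\to F$, which go through by Cauchy--Schwarz once the test functional and its derivative are taken bounded --- is sound.
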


\subsubsection{Malliavin calculus for the fractional Brownian motion}
In this subsection, we introduce the fractional Brownian motion and the Hilbert space associated with the fractional Brownian motion.
\begin{definition}
A cented Gaussian process $(W_t^H)_{t\geq0}$ is called fractional Brownian motion of Hurst index $H \in (0,1)$ if it has the covariance function
\begin{align*}
R_H(t,s) := \mathbb{E} [W_t^H W_s^H]=\frac{1}{2}(s^{2H}+t^{2H}-|t-s|^{2H}).
\end{align*}
\end{definition}
In this paper we will only use the fractional Brownian motions with Hurst index $H>\frac{1}{2}$. We denote by $\mathcal{E}$ the set of step functions on [0, T]. Let $\mathcal{H}$ be the Hilbert space defined as the closure of $\mathcal{E}$  with respect to the scalar product
\begin{align*}
\left\langle 1_{[0,,t]}, 1_{[0,s]} \right\rangle_{\mathcal{H}} =R_H(t,s),
\end{align*}
which yields that, for any $\phi, \psi \in \mathcal{H}$,
\begin{align*}
\left\langle \psi , \phi \right\rangle_{\mathcal{H}} := H(2H-1)\int_0^T \int_0^T |r-u|^{2H-2} \psi(r) \phi(u) dr du.
\end{align*}
It is easy to see that the covariance of fractional Brownian motion can be written as
\begin{align*}
\left\langle 1_{[0,t]} , 1_{[0,s]} \right\rangle_{\mathcal{H}} &=H(2H-1)\int_0^t \int_0^s |r-u|^{2H-2}dudr \\
&=R_H(t,s).
\end{align*}
Therefore, fractional Brownian motion $W^H$ can be expressed as $W_t^H=W(1_{[0,t]})$ for the isonormal Gaussian $W$ associated with the Hilbert space $\mathcal{H}$.
Consider the square integrable kernel 
\begin{align*}
K_H(t,s) := c_H s^{\frac{1}{2}-H} \int_s^t (u-s)^{H-\frac{3}{2}}u^{H-\frac{1}{2}}du,
\end{align*}
where $c_H=\left[ \frac{H(2H-1)}{B(2-2H, H-\frac{1}{2})}\right]^{\frac{1}{2}}$ and $t>s$.
Define the isometric function $K_H^* : \mathcal{E} \rightarrow  L^2(0,T)$ by
\begin{align*}
(K_H^*\phi)(s):=\int_s^T \phi(t)\frac{\partial K_H}{\partial t}(t,s) dt,
\end{align*}
then the operator $K_H^*$ is an isometry between $\mathcal{E}$ and $L^2(0,T)$ that can be extended to the Hilbert space $\mathcal{H}$. The operator $K_H^*$ can be expressed in terms of fractional integrals:
\begin{align*}
(K_H^*\phi)(s)=c_H\Gamma(H-\frac{1}{2})s^{\frac{1}{2}-H}(I_{T-}^{H-\frac{1}{2}}\cdot^{H-\frac{1}{2}}\phi(\cdot))(s).
\end{align*}
Finally, we consider the Malliavin calculus on $\mathcal{H}$. For the sake of simplicity, we will use the notation $D^{W^H}, \mathbb{D}_{W^H}^{1,p}$, and $\delta^{W^H}$ as the derivative operator, the domain of the derivative, and the divergence operator associated with the Hilbert space $\mathcal{H}$, respectively. In the sequel, we present two results on the derivative operator for fractional Brownian motion.
\begin{proposition}
For any $F\in \mathbb{D}_{W_H}^{1,2}=\mathbb{D}_{L^2(0,T)}^{1,2}$
\begin{align*}
K_H^*D^{W_H}F=D^{L^2([0,T])}F.
\end{align*}
\end{proposition}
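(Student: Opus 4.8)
The plan is to exploit the fact, recorded in the excerpt, that $K_H^*$ is an isometry of $\mathcal{H}$ onto $L^2([0,T])$, and to use it to identify the two isonormal Gaussian structures underlying the two derivative operators. Writing $B$ for the isonormal Gaussian process associated with $L^2([0,T])$ (a standard Brownian motion on $[0,T]$), I would first check that $W(\phi)=B(K_H^*\phi)$ for every $\phi\in\mathcal{H}$. Both sides are centered Gaussian, so it suffices to match covariances, and this is immediate from the isometry:
\begin{align*}
\mathbb{E}[W(\phi)W(\psi)]=\langle\phi,\psi\rangle_{\mathcal{H}}=\langle K_H^*\phi,K_H^*\psi\rangle_{L^2([0,T])}=\mathbb{E}[B(K_H^*\phi)B(K_H^*\psi)].
\end{align*}
In particular $W_t^H=W(1_{[0,t]})=B(K_H^*1_{[0,t]})$, the standard representation of the fractional Brownian motion as an integral of $K_H$ against $B$.

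Next I would verify the identity on the smooth random variables. Because $K_H^*$ is onto, a random variable is smooth in the fractional Brownian sense iff it is smooth in the $B$ sense: if $F=f(W(\phi_1),\dots,W(\phi_n))$ with $\phi_i\in\mathcal{H}$, then setting $\psi_i:=K_H^*\phi_i\in L^2([0,T])$ the previous step gives $F=f(B(\psi_1),\dots,B(\psi_n))$, and conversely by surjectivity of $K_H^*$. For such an $F$ the two derivatives are, by Definition 3,
\begin{align*}
D^{W_H}F=\sum_{i=1}^n\partial_if(W(\phi_1),\dots,W(\phi_n))\,\phi_i\in\mathcal{H},\qquad D^{L^2([0,T])}F=\sum_{i=1}^n\partial_if(B(\psi_1),\dots,B(\psi_n))\,\psi_i\in L^2([0,T]).
\end{align*}
Applying the linear operator $K_H^*$ to the first expression and recalling $\psi_i=K_H^*\phi_i$ together with $W(\phi_i)=B(\psi_i)$ yields exactly the second, so $K_H^*D^{W_H}F=D^{L^2([0,T])}F$ on the smooth random variables.

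Finally I would pass to the limit. Since $K_H^*$ is an isometry, $\|D^{W_H}F\|_{\mathcal{H}}=\|K_H^*D^{W_H}F\|_{L^2([0,T])}=\|D^{L^2([0,T])}F\|_{L^2([0,T])}$ for smooth $F$, so the two seminorms $\|\cdot\|_{1,2}$ coincide there; taking closures gives $\mathbb{D}_{W_H}^{1,2}=\mathbb{D}_{L^2(0,T)}^{1,2}$. For general $F$ in this common space, choose smooth $F_k\to F$ in $\|\cdot\|_{1,2}$; then $D^{W_H}F_k\to D^{W_H}F$ in $\mathcal{H}$, and applying the continuous isometry $K_H^*$ with the smooth-case identity gives $D^{L^2([0,T])}F_k=K_H^*D^{W_H}F_k\to K_H^*D^{W_H}F$, while closability of $D^{L^2([0,T])}$ forces the limit to equal $D^{L^2([0,T])}F$. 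This yields the claimed identity.

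The main obstacle is the careful bookkeeping at the level of the underlying probability space: one must ensure that $W$ and $B$ generate the same $\sigma$-algebra $\mathcal{G}$, so that ``smooth functional of $W$'' and ``smooth functional of $B$'' describe literally the same random variables. This rests on the surjectivity of $K_H^*$ onto $L^2([0,T])$, which is exactly the property available in the regime $H>\tfrac12$ assumed here. Granting the isometry and surjectivity stated in the excerpt, the remaining arguments are the standard definition-chasing and closure steps.
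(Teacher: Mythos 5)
The paper itself offers no proof of this proposition: it is quoted as background from Chapter 5 of Nualart \cite{Nualart2} (the transfer principle between the Malliavin calculus for $W^H$ and for the underlying Brownian motion), so there is no argument of the authors' to compare yours against. Your proof is the standard one and is essentially correct: verify the identity on smooth functionals by rewriting $F=f(W(\phi_1),\dots,W(\phi_n))$ as $f(B(K_H^*\phi_1),\dots,B(K_H^*\phi_n))$, apply the chain-rule formula for both derivatives, and close up using the isometry and closability. One point to tighten: matching covariances only shows that $W(\phi)$ and $B(K_H^*\phi)$ agree \emph{in law}, whereas the proposition is an almost-sure identity of random variables; you should \emph{define} $B(h):=W\bigl((K_H^*)^{-1}h\bigr)$ for $h\in L^2(0,T)$ (which requires the surjectivity of $K_H^*$ onto $L^2(0,T)$ that you correctly flag) and then use the covariance computation only to confirm that this $B$ is an isonormal Gaussian process on $L^2(0,T)$ generating the same $\sigma$-algebra $\mathcal{G}$. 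With that adjustment the smooth-case identity is an exact equality of random variables and the limiting argument goes through as you describe.
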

\begin{proposition}
$\underset{0 \leq t \leq T}{\sup}(W_t^H-\theta t) $ belongs to $\mathbb{D}_{W_H}^{1,2}$ and it holds $D_t^{W_H} \underset{0 \leq t \leq T}{\sup}(W_t^H-\theta t) =1_{[0,\tau]}(t)$, for any $t \in [0,T]$, where $\tau$ is the point where the supremum is attained.
\end{proposition}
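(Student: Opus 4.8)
The plan is to approximate the continuous supremum by maxima over a countable dense grid, differentiate each finite maximum by the (Lipschitz) chain rule, and then pass to the limit using the closedness of $D^{W_H}$. Write $F:=\sup_{0\le t\le T}(W_t^H-\theta t)$ and fix a sequence $\{t_i\}_{i\ge 1}$ that is dense in $[0,T]$. Set $M_n:=\max_{1\le i\le n}(W_{t_i}^H-\theta t_i)$. Since $W^H$ has continuous sample paths and $\{t_i\}$ is dense, $M_n\uparrow F$ almost surely; moreover the supremum of a continuous Gaussian process over a compact interval has finite moments of every order (Borell--TIS / Fernique), so $F\in L^2(\Omega)$ and $M_n\to F$ in $L^2(\Omega)$ as well.

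First I would differentiate $M_n$. Each coordinate $W_{t_i}^H=W(1_{[0,t_i]})$ lies in $\mathbb{D}_{W_H}^{1,\infty}$ with $D^{W_H}W_{t_i}^H=1_{[0,t_i]}$, and the deterministic drift $-\theta t_i$ contributes nothing. The map $\phi_n(x_1,\dots,x_n):=\max_{1\le i\le n}(x_i-\theta t_i)$ is globally Lipschitz, so by the Lipschitz extension of the chain rule (Nualart \cite{Nualart2}) we get $M_n=\phi_n(W_{t_1}^H,\dots,W_{t_n}^H)\in\mathbb{D}_{W_H}^{1,2}$ and
\begin{align*}
D^{W_H}M_n=\sum_{i=1}^n 1_{A_i^{(n)}}\,1_{[0,t_i]}=1_{[0,\tau_n]},
\end{align*}
where $A_i^{(n)}$ is the event that the maximum among $t_1,\dots,t_n$ is attained at $t_i$ and $\tau_n$ is the corresponding maximizer. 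There are no ties almost surely, because $W_{t_i}^H-W_{t_j}^H$ is a nondegenerate Gaussian variable whenever $t_i\neq t_j$, so the events $A_i^{(n)}$ partition $\Omega$ up to a null set and $\tau_n$ is well defined.

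Next I would pass to the limit. Because $\|1_{[0,\tau_n]}\|_{\mathcal{H}}^2=R_H(\tau_n,\tau_n)=\tau_n^{2H}\le T^{2H}$, the derivatives $D^{W_H}M_n$ are bounded in $L^2(\Omega;\mathcal{H})$ uniformly in $n$. Combining the $L^2(\Omega)$ convergence $M_n\to F$ with this uniform bound and the closedness of $D^{W_H}$ yields $F\in\mathbb{D}_{W_H}^{1,2}$, together with weak convergence of $D^{W_H}M_n$ to $D^{W_H}F$ in $L^2(\Omega;\mathcal{H})$ (Nualart \cite{Nualart2}). It remains to identify this weak limit. Let $\tau$ be a point where the continuous supremum $F$ is attained. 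By continuity of the paths, any subsequential limit of $\tau_n$ is itself a maximizer of $t\mapsto W_t^H-\theta t$ on $[0,T]$; hence, once uniqueness of the maximizer is established, $\tau_n\to\tau$ almost surely, so $1_{[0,\tau_n]}\to 1_{[0,\tau]}$ almost surely in $\mathcal{H}$ and, by the uniform bound, also in $L^2(\Omega;\mathcal{H})$. Matching the strong limit $1_{[0,\tau]}$ with the weak limit $D^{W_H}F$ gives $D^{W_H}F=1_{[0,\tau]}$, i.e. $D_t^{W_H}F=1_{[0,\tau]}(t)$.

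The main obstacle is precisely the almost-sure uniqueness of the point $\tau$ at which the drifted fractional Brownian motion attains its maximum on $[0,T]$. This is what makes the formula $D_t^{W_H}F=1_{[0,\tau]}(t)$ meaningful and what lets me identify the limit above. Unlike the trivial absence of ties for finitely many points, uniqueness for the continuous maximizer does not follow from pairwise nondegeneracy alone; I expect to invoke a Gaussian-process argument for the location of the maximum of a continuous process (in the spirit of Kim--Pollard / Lifshits / Pimentel), which applies here because the increments $W_t^H-W_s^H$ are nondegenerate for $t\neq s$ and the deterministic drift does not affect the argument.
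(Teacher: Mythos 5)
Your argument is correct and is essentially the same as the one behind the paper's proof: the paper simply cites Lemma 3.2 of Florit and Nualart \cite{Florit}, whose proof is exactly your scheme of approximating the supremum by finite maxima over a dense grid, applying the Lipschitz chain rule, invoking the closedness criterion ($L^2$ convergence of $M_n$ plus a uniform bound on $\mathbb{E}\|D^{W_H}M_n\|_{\mathcal{H}}^2$) and identifying the limit via the almost sure uniqueness of the maximizer. The one ingredient you leave as ``to be invoked''---a.s.\ uniqueness of the argmax for a continuous Gaussian process with pairwise nondegenerate increments---is precisely what that reference (and the classical lemma of Lifshits/Kim--Pollard you name) supplies, so nothing essential is missing.
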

\begin{proof}
For the proof, see Lemma 3.2 in Florit and Nualart \cite{Florit}.
\end{proof}

\section{Statistical problems}
Suppose that we have the past surplus data in $[0,T_0]$-interval at discrete time points $\frac{[kT_0]}{n}$ ($k=0, 1,  \cdots, n$). Our goal is to estimate the finite-time ruin probability for each $T \in (0,\infty]$ from the discrete data $(X_{\frac{[kT_0]}{n}})_{k\in \{0, 1, \cdots, n\}}$.

\subsection{Estimation of $\sigma$}
Fix $0<H<\frac{3}{4}$. We use the results of Corcuera and Nualart \cite{Nualart1}  to construct the estimator for the true value of $\sigma$ by using a power variations of the order $p>0$. We define the power variation of $(X_t)$ as follows:
\begin{align*}
V_p^n(X)_t:=\sum_{i=1}^{[nt]} \left|X_{\frac{i}{n}}-X_{\frac{i-1}{n}}\right|^p.
\end{align*}
Defining  the estimator $\hat{\sigma}_{k, t ,n}$ of $\sigma_0$ as
\begin{align}
\label{sigmahat}
\widehat{\sigma}_{k, t ,n}:=\left( \frac{V_p^n(X)_t}{c_pn^{1-pH}t}\right)^{\frac{1}{p}},
\end{align}
where $c_p=\frac{2^{\frac{p}{2}}\Gamma\left(\frac{p+1}{2}\right) }{\Gamma\left(\frac{1}{2}\right)}$. We obtain the asymptotic normality of $\widehat{\sigma}_{k, t ,n}$ as in the following theorem.
\begin{theorem}
Let $p>1$ and $0<H<\frac{3}{4}$. Then
\begin{align*}
\sqrt{n} \left( \left(\widehat{\sigma}_{k,t,n}\right)^p - \sigma^p \right) \xrightarrow{\mathcal{L}} \frac{v_1 \sigma^p}{c_{p}} W_t  \quad (n \rightarrow \infty),
\end{align*}
in law in the space $\mathcal{D}([0,T_0])$ equipped with the Skorohod topology, where
\begin{align*}
 v_1^2&=\mu_p+2\sum_{j\geq1} \left(\gamma_p\left(\rho_H(j)\right)- \gamma_p(0)\right),\\
\mu_p&=2^p\left(\frac{1}{\sqrt{\pi}}\Gamma\left(p+\frac{1}{2}\right)-\frac{1}{\pi}\Gamma\left(\frac{p+1}{2}\right)^2\right),\\
\gamma_p(x)&=(1-x^2)^{\frac{p+1}{2}}2^p\sum_{k=0}^{\infty} \frac{(2x)^{2k}}{\pi (2k)!}\Gamma\left(\frac{p+1}{2}+k\right)^2,\\
\rho_H(n)&=\frac{1}{2}\left((n+1)^{2H}+(n-1)^{2H}-2n^{2H}\right),
\end{align*}
and  $(W_t)_{t\in [0,T_0]}$ is a Brownian
motion independent of the fractional Brownian motion $W_t^H$.
\end{theorem}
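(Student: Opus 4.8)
The plan is to reduce $V_p^n(X)_t$ to the power variation of the driftless fractional Brownian motion and then invoke the central limit theorem of Corcuera and Nualart \cite{Nualart1}. Put $\Delta_i:=W_{i/n}^H-W_{(i-1)/n}^H$ and $\xi_i:=n^H\Delta_i$; by self-similarity and stationarity of the increments of $W^H$, the sequence $(\xi_i)_i$ is a unit-variance fractional Gaussian noise with autocovariance $\rho_H$. Since $X_{i/n}-X_{(i-1)/n}=\sigma n^{-H}(\theta n^{H-1}-\xi_i)$, writing $a_n:=\theta n^{H-1}\to0$ gives
\begin{align*}
\left(\widehat{\sigma}_{k,t,n}\right)^p=\frac{V_p^n(X)_t}{c_p n^{1-pH}t}=\frac{\sigma^p}{c_p\,nt}\sum_{i=1}^{[nt]}\left|a_n-\xi_i\right|^p .
\end{align*}
Because $c_p=\mathbb{E}\big[|Z|^p\big]$ for a standard Gaussian $Z$, this exhibits $(\widehat{\sigma}_{k,t,n})^p$ as a normalized $p$-th power variation of fractional Gaussian noise perturbed by the vanishing deterministic shift $a_n$.

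First I would treat the driftless part. Set $Y_i:=|\xi_i|^p-c_p$. Expanding $|x|^p$ in Hermite polynomials shows that $Y_i$ is even and centred, hence of Hermite rank $2$. Since $\rho_H(j)\sim H(2H-1)j^{2H-2}$, the series $\sum_j\rho_H(j)^2$ converges exactly when $H<3/4$, which is the standing hypothesis, so the Breuer--Major theorem in the functional, stably convergent form used in \cite{Nualart1} yields
\begin{align*}
\frac{1}{\sqrt n}\sum_{i=1}^{[nt]}Y_i\xrightarrow{\mathcal{L}} v_1\,W_t\qquad(n\to\infty)
\end{align*}
in $\mathcal{D}([0,T_0])$, where $v_1^2=\sum_{j\in\mathbb{Z}}\mathrm{Cov}(Y_0,Y_j)$ equals the constant in the statement and the limiting Brownian motion $W$ is independent of $W^H$, the independence coming from the asymptotic vanishing of the contraction norms exactly as in \cite{Nualart1}.

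The main obstacle is to prove that the shift is asymptotically negligible, i.e. that
\begin{align*}
R_n(t):=\frac{1}{\sqrt n}\sum_{i=1}^{[nt]}\left(\left|a_n-\xi_i\right|^p-\left|\xi_i\right|^p\right)\xrightarrow{\mathbb{P}}0 .
\end{align*}
The naive estimate $\big||a_n-\xi_i|^p-|\xi_i|^p\big|\le p\,a_n(|\xi_i|+a_n)^{p-1}$ only gives $R_n(t)=O(n^{H-1/2})$, which does \emph{not} vanish for $H>1/2$; the resolution is that the first-order term $-p\,a_n\,n^{-1/2}\sum_{i=1}^{[nt]}\mathrm{sgn}(\xi_i)|\xi_i|^{p-1}$ is built from an \emph{odd} function, whose first Hermite coefficient $\mathbb{E}\big[\mathrm{sgn}(Z)|Z|^{p-1}Z\big]=c_p$ is nonzero, so it has Hermite rank $1$. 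Because $\sum_j\rho_H(j)$ diverges for $H>1/2$, the variance of the corresponding partial sum grows like $n^{2H}$ (as for $W_n^H$ itself), whence this term is $O_{\mathbb{P}}\big(a_n\,n^{H}/\sqrt n\big)=O_{\mathbb{P}}(n^{2H-3/2})$, which tends to $0$ precisely because $H<3/4$; the second-order remainder is $O(a_n^2\sqrt n)=O(n^{2H-3/2})$ and is controlled by analogous estimates. This cancellation, invisible to absolute-value bounds, is the delicate point, and it is exactly where the threshold $H<3/4$ enters a second time, now through the long-memory (non-summable) regime of a rank-one functional.

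Finally I would assemble the pieces. The identity
\begin{align*}
\sqrt n\left(\left(\widehat{\sigma}_{k,t,n}\right)^p-\sigma^p\right)=\frac{\sigma^p}{c_p\,t}\left(\frac{1}{\sqrt n}\sum_{i=1}^{[nt]}Y_i+R_n(t)+\frac{c_p\big([nt]-nt\big)}{\sqrt n}\right)
\end{align*}
shows that the last term is $O(n^{-1/2})$, the middle term is negligible by the previous step, and the first term converges to $v_1 W_t$; combining these with the continuous mapping theorem and the tightness supplied by \cite{Nualart1} upgrades the convergence to the functional level and yields the asserted convergence in law in the Skorohod space $\mathcal{D}([0,T_0])$.
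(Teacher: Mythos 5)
Your proposal is correct in substance but takes a genuinely different route from the paper's at the one point where the argument is delicate. Both arguments share the same core: the functional CLT of Corcuera and Nualart for the Hermite-rank-two functional $|x|^p-c_p$ of fractional Gaussian noise, valid for $H<3/4$ because $\sum_j\rho_H(j)^2<\infty$ there; your identification of $v_1^2$ as $\sum_{j\in\mathbb{Z}}\mathrm{Cov}(Y_0,Y_j)$ and of the independence of the limiting $W$ (via stable convergence) matches the statement. The divergence is in how the drift is removed. The paper reduces the problem to showing $n^{-1/2+pH}V^n_{k,p}(u+\sigma\theta t)\to0$, i.e.\ to the $p$-th power variation of the drift \emph{alone}, citing Corcuera--Nualart p.~727, and verifies this by induction on the order $k$ of the increments; the base case is of order $n^{1/2-p(1-H)}$ and so requires $(1-H)p>1/2$, which the paper merely ``notes'' but which does not follow from the stated hypotheses $p>1$, $0<H<3/4$ (take $p=1.2$, $H=0.7$). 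You instead expand $|a_n-\xi_i|^p-|\xi_i|^p$ to first order and observe that the linear term is driven by the odd, Hermite-rank-one function $\mathrm{sgn}(x)|x|^{p-1}$, whose centred partial sums are only $O_{\mathbb{P}}(n^{H})$ in the long-memory regime, giving the rate $n^{2H-3/2}\to0$ precisely for $H<3/4$. This cancellation is exactly what the crude absolute-value (or H\"older) bound misses --- as you note, that bound only yields $O(n^{H-1/2})$ --- so your treatment of the cross term covers the full parameter range claimed in the theorem, whereas the paper's verification, read literally, does not. Two details remain to be tidied in your write-up: for $1<p<2$ the second-order remainder cannot be bounded by $Ca_n^2$ pointwise, since $|x|^p$ is not twice differentiable at the origin (split on $\{|\xi_i|\le 2a_n\}$ and use $\mathbb{E}\,|\xi|^{p-2}<\infty$); and the negligibility of $R_n$ must be made uniform in $t$ to conclude in $\mathcal{D}([0,T_0])$, which follows from a maximal inequality based on $\mathbb{E}\,|S_m-S_l|^2\le C|m-l|^{2H}$ with $2H>1$.
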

\begin{proof}
It it sufficient to prove that
\begin{align}
\label{induction}
n^{-\frac{1}{2}+pH}V_{k,p}^n(u+\sigma\theta t) \rightarrow 0 \quad(n\rightarrow \infty),
\end{align}
in probability, uniformly on $[0,T_0]$, by Corcuera and Nualart \cite{Nualart1}, p.727. Thus, we show that 
\begin{align}
\label{induction2}
n^{-\frac{1}{2}+pH}V_{k,p}^n(u+\sigma \theta t)=n^{-\frac{1}{2}+pH} \sum_{i=1}^{[nt]-k+1} \left| \Delta_k(u+\sigma \theta \frac{i-1}{n}) \right|^p \rightarrow 0 \quad \text{a.s.},
\end{align}
as $n\rightarrow \infty$, 
by induction on k. Note that $(1-H)p>\frac{1}{2}$, when $k=1$ we have
\begin{align*}
n^{-\frac{1}{2}+pH} \sum_{i=1}^{[nt]} \left| \Delta_1(u+\sigma \theta \frac{i-1}{n})\right|^p&=n^{-\frac{1}{2}+pH}\sum_{i=1}^{[nt]} \left|\sigma \theta(\frac{i}{n}-\frac{i-1}{n}) \right|^p\\
&=n^{-\frac{1}{2}+pH}|\sigma \theta|^p\frac{[nt]}{n^p}\\
&\rightarrow 0,
\end{align*}
as $n \rightarrow \infty$.
In the same way , we have
\begin{align*}
n^{-\frac{1}{2}+pH} \sum_{i=2}^{[nt]+1} \left| \Delta_1(u+\sigma \theta \frac{i-1}{n})\right|^p \rightarrow 0\quad (n\rightarrow \infty).
\end{align*}
Assuming  
\begin{align*}
&n^{-\frac{1}{2}+pH} \sum_{i=1}^{[nt]-k+2} \left| \Delta_{k-1}(u+\sigma \theta \frac{i-1}{n})\right|^p \rightarrow 0 \quad(n \rightarrow \infty),\\
&n^{-\frac{1}{2}+pH} \sum_{i=1}^{[nt]-k+2} \left| \Delta_{k-1}(u+\sigma \theta \frac{i}{n})\right|^p \rightarrow 0 \quad(n \rightarrow \infty),
\end{align*} 
 holds, we get 
 \begin{align*}
 n^{-\frac{1}{2}+pH} &\sum_{i=1}^{[nt]-k+1} \left| \Delta_{k}(u+\sigma \theta \frac{i-1}{n})\right|^p \\
 &\leq n^{-\frac{1}{2}+pH}\sum_{i=1}^{[nt]-k+2} \left\{ \left| \Delta_{k-1}(u+\sigma \theta \frac{i}{n})\right|^p+ \left| \Delta_{k-1}(u+\sigma \theta \frac{i-1}{n})\right|^p \right\} \\
&\rightarrow 0 ,
 \end{align*}
 as $n \rightarrow \infty$ by $\Delta_{k}X_{i-1}=\Delta_{k-1}X_i-\Delta_{k-1}X_{i-1}$.
 Therefore, since $(\ref{induction2})$ holds for any $k \in \mathbb{N}$, we get 
 \begin{align*}
 \mathbb{P}\left( \sup_{0 \leq t \leq T} \left| n^{-\frac{1}{2}+pH}V_{k,p}^n(u+\sigma \theta t) \right| > \varepsilon \right) &\leq \frac{1}{\varepsilon^2} \left( n^{-\frac{1}{2}+pH}V_{k,p}^n(u+\sigma \theta T) \right)^2 \\
&\rightarrow 0.
 \end{align*}
 for any $\varepsilon >0$ and this proof is completed.
\end{proof}

\subsection{Simulation-based inference for $\Psi_{\sigma}(u, T)$}
Using the estimator $\widehat{\sigma}_{k,t,n}$ of $\sigma_0$  given in $(\ref{sigmahat})$, we can estimate $\Psi_{\sigma}(u,T)$ by
\begin{align*}
\widehat{\Psi}_{k,t,n}(u, T):=\Psi_{\widehat{\sigma}_{k,t,n}}(u, T),
\end{align*}
and, due to the delta method(cf. van der Vaart \cite{van der varrt} P.374), it follows that
\begin{align*}
\sqrt{n}\left( \widehat{\Psi}_{k,t,n}(u,T)-\Psi(u,T)\right)\rightarrow \partial_{\sigma} \Psi_{\sigma_0}(u,T) \frac{1}{p} \frac{v_1\sigma_0}{c_{k,p}}W_t  \quad (n\rightarrow \infty),
\end{align*}
in law in the space $\mathcal{D}([0,T_0])$ equipped with the Skorohod topology, if $\Psi_{\sigma}(u,T)$ is differentiable at the true volatility parameter $\sigma$. This leads us an $\alpha$-confidence interval for $\Psi_{\sigma}(u,T)$ such as 
\begin{align}
I_{\alpha}(\Psi):=\Bigg[ \widehat{\Psi}_{k,T_0,n}(u,T) \pm \frac{z_{\alpha/2}}{\sqrt{n}}|\partial_{\sigma}\Psi_{\widehat{\sigma}_{k,T_0,n}}(u,T)|\frac{\sqrt{T_0} v_1\widehat{\sigma}_{k,T_0,n}}{pc_{k,p}}\Bigg], %\notag \\
%&\qquad \widehat{\Psi}_{k,T_0,n}(u,T)+\frac{z_{\alpha/2}}{\sqrt{n}}\partial_{\sigma}\Psi_{\widehat{\sigma}_{k,T_0,n}}(u,T)\frac{\sqrt{T_0} v_1\widehat{\sigma}_{k,T_0,n}}{pc_{k,p}}\Big] 
\end{align}
where $[a\pm b]$ stands for the interval $[a-b,a+b]$ for $b>0$, and $z_{\alpha}$ stands for the upper $\alpha$-quantile.

 Now, the problem is to compute the following quantity;
 \begin{equation}
 \label{greeks}
 \partial_{\sigma}^k \Psi_{\sigma}(u,T)=\left(\frac{\partial}{\partial \sigma}\right)^k \mathbb{P}\left(\underset{0\leq t\leq T}{\inf}X_t^{\sigma}<0\right),\quad k=0,1.
 \end{equation}
 \begin{itemize}
 \item For $k=0$: Since $\Psi_{\sigma}(u,T)$ dose not have a closed expression, we will compute it by the Monte Carlo simulation for a given value of $\sigma$, that is, we generate sample paths of $X_t=u+\sigma\theta t- \sigma W_t^H$ for given $\sigma$, say $\left(X^{(k)}\right)_{k=1,2,\ldots,m}$ independent each other, obsearve
\begin{align*}
\widetilde{\Psi}(u,T)=\frac{1}{m}\sum_{k=1}^{m}\mathbf{1}_{\{ \tau^{(k)}\leq T\} },\quad \tau^{(k)}:=\inf\{t>0|X_t^{(k)}<0\},
\end{align*}
which goes to the true $\Psi_{\sigma}(u,T)$ almost surely as $m \rightarrow \infty$ by the strong law of large numbers.However, the event of ruin in $[0,T]$ is often very rare and the most of the indicators of summand will be zero, which will underestimate the true value $\Psi_{\sigma}(u,T)$. Changing the measure $\mathbb{P}$ into a suitable one , more efficient sampling procedure: importance sampling, will be proposed.

\item For $k=1$: Computing $\partial_{\sigma}\Psi_{\sigma}(u,T)$ is not straightforward because the integrand of the righthand side of ($\ref{greeks}$) is not differentiable in $\sigma$, and we can not differentiate it under the expectation sign $\mathbb{E}$. Moreover, computing numerically, e.g., for small $\epsilon>0$,
\begin{align}
\label{azq}
\frac{\Psi_{\sigma+\epsilon}(u,T)-\Psi_{\sigma}(u,T)}{\epsilon} \quad \text{or} \quad \frac{\Psi_{\sigma+\epsilon}(u,T)-\Psi_{\sigma-\epsilon}(u,T)}{2\epsilon},
\end{align}
we have to compute $\Psi_{\sigma+\epsilon}$ and $\Psi_{\sigma - \epsilon}(u,T)$ separeately, which usually takes much time. In addition, since the accuracy of the calculation in $(\ref{azq})$ depends on $\varepsilon$, the problem of determining the value of $\varepsilon$ also arises. Importance sampling can give the fast convergence with the variance reduction.
 \end{itemize}

 \section{Differentiability of $\Psi_{\sigma}$}
 Fix $H>\frac{1}{2}$. In this section, we discuss the differentiability of $\Psi_{\sigma}(u,T)$ with respect to $\sigma$ after the ideas of Lanjri and Nualart \cite{Nualart3} or  Gobet and Kohatsu-Higa \cite{gobet}.
 \begin{theorem}
 The finite-time ruin probability $\Psi_{\sigma}(u,T)$ is differentiable with respect to $\sigma$ and we have
\begin{align}
\label{main result}
 \partial \sigma \Psi_{\sigma}(u,T)=\mathbb{E}  \left[ 1_{[\sigma \underset{0 \leq t \leq T}{\sup} (W_t^H- \theta t) <u]} \delta^{W^H} \left(   \frac{u_A(\cdot) \underset{0 \leq t \leq T}{\sup}(W_t^H- \theta t)}{\sigma \int_0^T \psi(Y_t)dt}  \right) \right]
\end{align}
where 
\begin{align}
u_A(t)&:=\frac{d_H}{c_H \Gamma(H-\frac{1}{2})}t^{\frac{1}{2}-H}D_{T-}^{H-\frac{1}{2}} \left[(\cdot)^{2H-1}D_{0+}^{H-\frac{1}{2}}\left((\cdot)^{\frac{1}{2}-H}\psi(Y_{\cdot})\right) (\cdot)\right](t) \label{ua}\\
&=\frac{d_H}{c_HB\left(H-\frac{1}{2}, \frac{3}{2}-H\right) \Gamma \left(\frac{3}{2}-H \right)} t^{\frac{1}{2}-H} \nonumber\\
&\times \Biggl\{ \frac{1}{(T-t)^{H-\frac{1}{2}}} \left[ \psi(Y_t) + \left( H-\frac{1}{2} \right)t^{2H-1} \int_0^t \frac{t^{\frac{1}{2}-H} \psi(Y_t)- s^{\frac{1}{2}-H}\psi(Y_s)}{(t-s)^{H+\frac{1}{2}}}ds \right] \nonumber\\
&+\left(H-\frac{1}{2} \right) \int_t^T \biggl\{ \frac{\psi(Y_t)+\left(H-\frac{1}{2} \right) t^{2H-1} \int_0^t \frac{t^{\frac{1}{2}-H}\psi(Y_t)-u^{\frac{1}{2}-H}\psi(Y_u)}{(t-u)^{H+\frac{1}{2}}}du}{(s-t)^{H+\frac{1}{2}}} \nonumber\\
&-\frac{\psi(Y_s)+\left(H-\frac{1}{2} \right) s^{2H-1} \int_0^s \frac{s^{\frac{1}{2}-H}\psi(Y_s)-u^{\frac{1}{2}-H}\psi(Y_u)}{(s-u)^{H+\frac{1}{2}}}du}{(s-t)^{H+\frac{1}{2}}}\biggr\} ds\Biggr\}\nonumber\\
\label{y}
Y_t&:=8\left(4 \int_0^T \int_0^T \frac{|W_s^H- \theta s -(W_u^H- \theta u)|^r}{|s-u|^{m+2}}dsdu \right)^{\frac{1}{r}} \frac{m+2}{m} t^{\frac{m}{r}},
\end{align}

for any even integers r, m such that $rH>m+2$ and $\psi \in C_b^{\infty}(\mathbb{R}_{+})$ satisfies 
\begin{equation*}
  \psi(x)=\left\{\begin{matrix}
    1\quad(x\leq \frac{u}{2\sigma})\\
    0\quad(x \geq \frac{u}{\sigma})
  \end{matrix}\right. .
\end{equation*}
 \end{theorem}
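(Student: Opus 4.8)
The plan is to reduce the ruin event to a supremum functional, remove the non-smooth indicator by a Malliavin integration-by-parts, and absorb the randomness of the argmax into a localized weight built from $u_A$, $\psi$ and $Y$.

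First I would express the ruin event through the supremum. Since $X_t=u+\sigma\theta t-\sigma W_t^H$,
\begin{align*}
\inf_{0\le t\le T}X_t=u-\sigma\sup_{0\le t\le T}(W_t^H-\theta t)=u-\sigma M,\qquad M:=\sup_{0\le t\le T}(W_t^H-\theta t),
\end{align*}
so that $\Psi_\sigma(u,T)=\mathbb P(\sigma M>u)=1-\mathbb P(\sigma M<u)$, using that $M$ has a continuous law. By the Proposition on the Malliavin derivative of the supremum, $M\in\mathbb D^{1,2}_{W^H}$ and $D^{W^H}_tM=\mathbf 1_{[0,\tau]}(t)$, where $\tau$ is the a.s.\ unique argmax. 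Because $x\mapsto\mathbf 1_{\{x<u\}}$ is not differentiable, I would regularize it by a smooth family $\phi_\varepsilon$ decreasing from $1$ to $0$ across the level $u$, approximate $\mathbb P(\sigma M<u)\approx\mathbb E[\phi_\varepsilon(\sigma M)]$, and differentiate under the expectation to obtain $\partial_\sigma\mathbb E[\phi_\varepsilon(\sigma M)]=\mathbb E[\phi_\varepsilon'(\sigma M)\,M]$; the whole point is to move $\phi_\varepsilon'$ onto a Malliavin weight and then let $\varepsilon\to0$.

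For the integration-by-parts step, the chain rule gives $D^{W^H}\phi_\varepsilon(\sigma M)=\sigma\phi_\varepsilon'(\sigma M)\mathbf 1_{[0,\tau]}$. If I can produce a random $h\in\mathrm{Dom}\,\delta^{W^H}$ with $\langle\mathbf 1_{[0,\tau]},h\rangle_{\mathcal H}=1$, then $\phi_\varepsilon'(\sigma M)=\sigma^{-1}\langle D^{W^H}\phi_\varepsilon(\sigma M),h\rangle_{\mathcal H}$, and, using the product rule for the divergence together with the duality characterizing $\delta^{W^H}$,
\begin{align*}
\mathbb E[\phi_\varepsilon'(\sigma M)\,M]=\tfrac1\sigma\,\mathbb E\!\left[\big\langle D^{W^H}\phi_\varepsilon(\sigma M),\,M h\big\rangle_{\mathcal H}\right]=\tfrac1\sigma\,\mathbb E\!\left[\phi_\varepsilon(\sigma M)\,\delta^{W^H}(M h)\right].
\end{align*}
Thus the weight in the statement is exactly $Mh/\sigma$, i.e.\ $h=u_A/\!\int_0^T\psi(Y_t)\,dt$, and the entire difficulty is the construction of $h$ with the normalization $\langle\mathbf 1_{[0,\tau]},h\rangle_{\mathcal H}=1$ despite $\tau$ being random.

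To build the weight I would pass to $L^2(0,T)$ through the isometry $K_H^*$, using $K_H^*D^{W^H}=D^{L^2}$, so that $\langle\mathbf 1_{[0,\tau]},u_A\rangle_{\mathcal H}=\langle K_H^*\mathbf 1_{[0,\tau]},K_H^*u_A\rangle_{L^2}$. Since $K_H^*$ is a fractional integral of order $H-\tfrac12$, its inverse is the fractional derivative of the same order, and the nested derivatives $D_{T-}^{H-1/2},D_{0+}^{H-1/2}$ in the definition of $u_A$ are tailored (with $d_H$ chosen to absorb $c_H,\Gamma(H-\tfrac12)$) so that $K_H^*u_A$ is a multiple of $D_{0+}^{H-1/2}[(\cdot)^{1/2-H}\psi(Y_\cdot)]$. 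The fractional integration-by-parts identity, together with the explicit form of $K_H^*\mathbf 1_{[0,\tau]}$ and of $K_H$, then collapses the inner product to $\langle\mathbf 1_{[0,\tau]},u_A\rangle_{\mathcal H}=\int_0^\tau\psi(Y_s)\,ds$, which is why $\int_0^T\psi(Y_t)\,dt$ is the natural denominator. Here is where the cutoff and the Garsia--Rodemich--Rumsey functional enter: $Y_t\ge|W_t^H-\theta t|$ by the GRR inequality, and $Y$ is continuous, increasing, with $Y_0=0$; hence $\int_0^T\psi(Y_t)\,dt>0$ almost surely (non-degeneracy of the denominator), while at the critical level $\sigma M=u$ one has, at the argmax, $Y_\tau\ge W_\tau^H-\theta\tau=M=u/\sigma$, so $\psi(Y_s)=0$ for $s\ge\tau$ and the ratio $\int_0^\tau\psi/\!\int_0^T\psi$ equals $1$ precisely on the support of the limiting mass of $\phi_\varepsilon'$. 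This upgrades $\langle\mathbf 1_{[0,\tau]},h\rangle_{\mathcal H}$ to the required normalization; since $\mathbb E[\delta^{W^H}(Mh)]=0$, the indicator may be placed on either side of $u$ up to sign, and identifying the limit on the complementary event yields the factor $\mathbf 1_{\{\sigma M<u\}}$ of the statement.

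I would close by discharging the analytic hypotheses: that $Mh\in\mathrm{Dom}\,\delta^{W^H}$ (so the displayed duality is legitimate), which requires the appropriate $\mathbb D^{1,2}_{W^H}(\mathcal H)$-regularity of $u_A$ and integrability of $1/\!\int_0^T\psi(Y_t)\,dt$; the needed moments follow from $\psi\in C_b^\infty(\mathbb R_+)$, from finiteness of the GRR double integral defining $Y$ for even $r,m$ with $rH>m+2$, and from the a.s.\ strict positivity of the denominator. Dominated convergence then lets me send $\varepsilon\to0$, replacing $\phi_\varepsilon(\sigma M)$ by $\mathbf 1_{\{\sigma M<u\}}$ and producing the claimed formula for $\partial_\sigma\Psi_\sigma(u,T)$. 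The main obstacle is exactly the construction and justification of $h$: making the normalization rigorous despite the random argmax, controlling the reciprocal of the random denominator, and verifying membership in $\mathrm{Dom}\,\delta^{W^H}$; the fractional-calculus identity reducing $u_A$ to $\int_0^\tau\psi(Y_s)\,ds$ together with the GRR localization is the technical heart of the argument.
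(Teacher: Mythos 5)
Your overall architecture coincides with the paper's: reduce the ruin probability to $\mathbb P(\sigma M>u)$ with $M=\sup_{0\le t\le T}(W_t^H-\theta t)$, smooth the indicator, use the chain rule $D^{W^H}\phi(\sigma M)=\sigma\phi'(\sigma M)\mathbf 1_{[0,\tau]}$, build the weight as $(\tilde K_H^{*})^{-1}\circ(\tilde K_H^{*,\mathrm{adj}})^{-1}(\psi(Y_\cdot))$ so that $\langle \mathbf 1_{[0,\tau]},u_A\rangle_{\mathcal H}=\int_0^\tau\psi(Y_t)\,dt$, and use the GRR-dominating, nondecreasing process $Y$ together with the cutoff $\psi$ to replace $\int_0^\tau\psi$ by $\int_0^T\psi$ on the event where $\phi'$ is nonzero (this is exactly the paper's Proposition~1). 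Your list of analytic hypotheses to discharge ($u_A\in\mathbb D^{1,\infty}_{W^H}(\mathcal H)$, $(\int_0^T\psi(Y_t)\,dt)^{-1}\in L^p$, membership in $\mathrm{Dom}\,\delta^{W^H}$) matches the paper's Lemma~1 and Proposition~2. One caution: for the localization to work you need the support of $\phi_\varepsilon'$ to lie in $[u,\infty)$ (the paper takes $g_n=\mathbf 1_{[u+1/n,\,u+n+1/n]}*\rho_{1/n}$, which vanishes on $[0,u]$); a mollifier whose transition straddles the level $u$ would break the argument that $\psi(Y_s)=0$ for $s\ge\tau$ on the support of $\phi_\varepsilon'$.

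The one genuine gap is the final limit. You write that dominated convergence lets you send $\varepsilon\to 0$ and obtain the formula for $\partial_\sigma\Psi_\sigma(u,T)$. Dominated convergence gives you the pointwise (in $\sigma$) convergence of $f_\varepsilon(\sigma):=\mathbb E[\phi_\varepsilon(\sigma M)]$ and of $\mathbb E[\phi_\varepsilon(\sigma M)\,\delta^{W^H}(\cdots)]$, but it does not justify the interchange $\partial_\sigma\lim_\varepsilon=\lim_\varepsilon\partial_\sigma$: to conclude that the limit function $\Psi_\sigma(u,T)$ is differentiable with the limiting derivative, you need the convergence of the derivatives $\partial_\sigma f_\varepsilon$ to be locally uniform in $\sigma$. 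This is where the paper spends the second half of its proof: it bounds
\begin{align*}
\Bigl|\partial_\sigma f_n(\sigma)-\mathbb E\bigl[\mathbf 1_{[u,\infty)}(\sigma M)\,\delta^{W^H}(\cdots)\bigr]\Bigr|
\underset{\sim}{<}\ \mathbb P\bigl(\sigma M\in[u,u+\tfrac 2n]\bigr)+\mathbb P\bigl(\sigma M\ge u+n\bigr),
\end{align*}
and then proves that the right-hand side tends to zero uniformly on compacts in $\sigma$ via a continuity-in-$\sigma$ argument combined with Dini's theorem (the delicate point being that the interval $[u/\sigma,(u+2/n)/\sigma]$ moves with $\sigma$). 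Without some version of this uniform control your proof establishes the identity only for the smooth approximations, not the differentiability of $\Psi_\sigma$ itself. A secondary point you treat correctly but should make explicit: the duality produces $\mathbb E[\mathbf 1_{[u,\infty)}(\sigma M)\,\delta^{W^H}(\cdots)]$, and converting it to the stated $\mathbf 1_{\{\sigma M<u\}}$ via $\mathbb E[\delta^{W^H}(\cdots)]=0$ introduces a sign that must be tracked.
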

In the proof of $(\ref{main result})$, it suffices to show that 
\begin{align}
\label{submain}
&\partial \sigma\mathbb{E} \left[ \phi (\sigma \underset{0\leq t \leq T}{\sup} (W_t^H- \theta t) )\right]\notag \\
&=\mathbb{E}  \left[ \phi \left(\sigma \underset{0 \leq t \leq T}{\sup} (W_t^H- \theta t)  \right) \delta^{W^H} \left(   \frac{u_A(\cdot) \underset{0 \leq t \leq T}{\sup}(W_t^H- \theta t)}{\sigma \int_0^T \psi(Y_t)dt}  \right) \right]
\end{align}
 holds for a sufficiently smooth function $\phi:\mathbb{R}_{+} \rightarrow \mathbb{R}$ instead of  $1_{(-\infty, 0)}(\cdot)$ in $(\ref{main result})$ from the density argument.
 In the following, we impose the following conditions on $\phi$:
\begin{itemize}
\item[(1)] $\phi \in C_b^{\infty}(\mathbb{R}_{+})$,
\item[(2)] The function $\phi$ is constant on $[0,u]$.
\end{itemize}
We prove the following properties of $Y_t$. 

\begin{lemma}
For $Y_t$ defined in $(\ref{y})$, the following (1)--(3) hold.
\begin{itemize}
\item[(1)] $|W_t^H- \theta t| \leq Y_t \quad (t\in [0,T])$.
\item[(2)] $\psi(Y_t) \in \mathbb{D}_{W^H}^{1, \infty} \quad (t\in [0,T])$. 
\item[(3)] There exists a  function $\alpha:\mathbb{R}\rightarrow \mathbb{R}_{+}$, with $\lim_{q \rightarrow \infty} \alpha(q) = \infty$ , such that, for any $q \geq 1$, one has: $\forall t \in  [0, T] \quad E[Y_t^q] \leq C_q t^{\alpha(q)}$.
\item[(4)] $\left(\int_0^T \psi(Y_t)dt\right)^{-1}\in L^p(\Omega)\quad (p\geq1)$.
\end{itemize}
\end{lemma}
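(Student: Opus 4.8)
The plan is to reduce everything to the Garsia–Rodemich–Rumsey (GRR) inequality together with a single Gaussian moment estimate. Throughout I would write $\xi_t := W_t^H - \theta t$ and
$$Z := \int_0^T\int_0^T \frac{|\xi_s - \xi_u|^r}{|s-u|^{m+2}}\,ds\,du,$$
so that $Y_t = 8\,\tfrac{m+2}{m}\,(4Z)^{1/r}\,t^{m/r}$; the two structural facts I will exploit repeatedly are that $Z$ does not depend on $t$ and that $t\mapsto Y_t = Kt^{m/r}$ is nondecreasing. The one analytic input I need is the bound $\mathbb{E}|\xi_s-\xi_u|^q \underset{\sim}{<} |s-u|^{qH}$, uniform on $[0,T]^2$: indeed $\xi_s-\xi_u$ is Gaussian with mean $-\theta(s-u)$ and variance $|s-u|^{2H}$, and $|s-u|^q\le T^{q(1-H)}|s-u|^{qH}$ because $H<1$.

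For (1), I would apply the GRR inequality to the continuous path $\xi$ with gauges $\Psi(x)=x^r$ and $p(v)=v^{(m+2)/r}$. A direct computation of $\int_0^t \Psi^{-1}(4Z/v^2)\,dp(v)$ gives exactly $(4Z)^{1/r}\tfrac{m+2}{m}t^{m/r}$, so GRR applied with base point $0$ (where $\xi_0=0$) produces $|\xi_t|\le Y_t$. The random constant $Z$ is finite almost surely since $\mathbb{E}[Z]<\infty$ under $rH>m+1$, which the hypothesis $rH>m+2$ ensures. For (3), since $Y_t^q = C\,t^{qm/r}\,Z^{q/r}$, it suffices to prove $\mathbb{E}[Z^{\ell}]<\infty$ for every $\ell>0$, and then take $\alpha(q):=qm/r\to\infty$. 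For $\ell\ge1$ I would apply Minkowski's integral inequality and the Gaussian moment bound to get
$$\|Z\|_\ell \le \int_0^T\int_0^T \frac{(\mathbb{E}|\xi_s-\xi_u|^{r\ell})^{1/\ell}}{|s-u|^{m+2}}\,ds\,du \underset{\sim}{<} \int_0^T\int_0^T |s-u|^{rH-m-2}\,ds\,du,$$
which is finite precisely because $rH-m-2>-1$; for $0<\ell<1$ the bound follows from Jensen's inequality and $\mathbb{E}[Z]<\infty$. These moment bounds are exactly what (2) and (4) will consume.

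For (2), I would first show $Z\in\mathbb{D}^{1,\infty}_{W^H}$. Since $r$ is even, $|\xi_s-\xi_u|^r=(\xi_s-\xi_u)^r$ is a smooth polynomial in the Gaussian increments, with $D^{W^H}(\xi_s-\xi_u)^r = r(\xi_s-\xi_u)^{r-1}(1_{[0,s]}-1_{[0,u]})$. Approximating the double integral by Riemann sums of smooth functionals and passing to the limit via the closedness of $D^{W^H}$, I expect to obtain
$$D^{W^H}Z = r\int_0^T\int_0^T \frac{(\xi_s-\xi_u)^{r-1}(1_{[0,s]}-1_{[0,u]})}{|s-u|^{m+2}}\,ds\,du;$$
then, using $\|1_{[0,s]}-1_{[0,u]}\|_{\mathcal H}=|s-u|^H$ and the same Minkowski/Gaussian estimate as in (3), I would bound $\mathbb{E}\big[\|D^{W^H}Z\|_{\mathcal H}^p\big]$ by a power of $\int\!\!\int|s-u|^{rH-m-2}<\infty$ for all $p$, so that $Z\in\mathbb{D}^{1,\infty}_{W^H}$. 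Finally, fixing $t>0$ and setting $c_t:=8\tfrac{m+2}{m}4^{1/r}t^{m/r}$, I would write $\psi(Y_t)=g(Z)$ with $g(z):=\psi(c_t z^{1/r})$ and observe that, although $z\mapsto z^{1/r}$ fails to be smooth at $0$, the map $g$ is constant $\equiv1$ near $0$ (because $\psi\equiv1$ near $0$) and constant $\equiv0$ for large $z$, hence $g\in C_b^\infty(\mathbb{R}_+)$; the chain rule (Proposition 1) then yields $\psi(Y_t)=g(Z)\in\mathbb{D}^{1,\infty}_{W^H}$, the case $t=0$ being trivial. For (4), I would exploit monotonicity: since $\psi\ge0$ and $\psi\equiv1$ on $[0,u/(2\sigma)]$,
$$\int_0^T\psi(Y_t)\,dt \ge \big|\{t\in[0,T]:Y_t\le u/(2\sigma)\}\big| = \min\Big(T,\ \big(\tfrac{u}{2\sigma K}\big)^{r/m}\Big),$$
obtained by solving $Kt^{m/r}=u/(2\sigma)$. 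Thus $\big(\int_0^T\psi(Y_t)\,dt\big)^{-1}\le T^{-1}+C\,Z^{1/m}$, and after raising to the $p$-th power and taking expectations the finiteness of every moment of $Z$ from (3) gives $\big(\int_0^T\psi(Y_t)\,dt\big)^{-1}\in L^p(\Omega)$ for all $p\ge1$.

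I expect the main obstacle to be the rigorous commutation of $D^{W^H}$ with the singular double integral defining $Z$ in step (2): the approximation argument must keep the Gaussian moment estimates uniform enough to pass to the limit in $\mathbb{D}^{1,p}_{W^H}$ simultaneously for all $p$. The subtler structural point, easy to overlook, is that the non-smoothness of $z\mapsto z^{1/r}$ at the origin is harmless only because $\psi$ is flat near $0$, which is exactly why the cutoff $\psi$ is built the way it is.
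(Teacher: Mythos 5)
Your proof is correct, and for items (1)--(3) it follows essentially the route the paper intends: the paper simply defers (1) and (2) to Lemma 2.1 of Gobet and Kohatsu-Higa, which is exactly the Garsia--Rodemich--Rumsey argument with $\Psi(x)=x^r$, $p(v)=v^{(m+2)/r}$ that you carry out (your computation of $8\int_0^t\Psi^{-1}(4Z/v^2)\,dp(v)=(4Z)^{1/r}\tfrac{m+2}{m}\,8\,t^{m/r}$ is the reason $Y_t$ has the form it does), together with the chain rule applied to $g(z)=\psi(c_t z^{1/r})$, where the flatness of $\psi$ near $0$ rescues the non-smoothness of $z\mapsto z^{1/r}$; your explicit moment bound $\mathbb{E}[Y_t^q]\underset{\sim}{<} t^{qm/r}\,\mathbb{E}[Z^{q/r}]$ via Minkowski's integral inequality and $rH>m+2$ supplies the substance of (3), which the paper dismisses with the (apparently misprinted) remark that it ``follows from (2).'' Where you genuinely diverge is (4): the paper proves the small-ball estimate $\mathbb{P}\big(\sigma\int_0^T\psi(Y_t)\,dt<\varepsilon\big)\le\mathbb{P}\big(Y_{\varepsilon/\sigma}\ge u/(2\sigma)\big)=O(\varepsilon^{\alpha(q)})$ using the monotonicity of $Y$ and item (3), and then invokes Nualart's Lemma 2.3.1 to convert polynomial small-ball decay of all orders into negative moments of all orders; you instead exploit the exact power form $Y_t=Kt^{m/r}$ to get the pathwise bound $\big(\int_0^T\psi(Y_t)\,dt\big)^{-1}\le T^{-1}+C Z^{1/m}$ and conclude directly from the moments of $Z$. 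Your version is more elementary and self-contained; the paper's version is more robust in that it would survive if $Y_t$ were only known to be nondecreasing with the moment bound (3) rather than an explicit power of $t$. The one place where your argument is still a sketch rather than a proof is the commutation of $D^{W^H}$ with the singular double integral defining $Z$, but you identify this honestly and the uniform $L^p$ bounds you state are the right ingredients to close it.
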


\begin{proof}
We can show (1) and (2) similarly as in the proof of  Lemma 2.1 in  Gobet and Kohatsu-Higa \cite{gobet}. (3) follows immediately from (2). Next, we prove (4). It is sufficient to show   that 
\begin{align*}
\mathbb{P}\left( \int_0^T \sigma K_H(\tau, t) \Phi(Y_t) dt < \varepsilon \right) = O(\varepsilon^p) \quad(\varepsilon \downarrow 0),
\end{align*}
 holds from Nualart \cite{Nualart2}, p.133, Lemma 2.3.1. Since 
 \begin{align*}
 \int_0^T \psi(Y_t) dt &=\int_0^{\frac{\epsilon}{\sigma}} \psi(Y_t) dt + \int_{\frac{\epsilon}{\sigma}}^T \psi(Y_t) dt\\
 &\geq \frac{\epsilon}{\sigma},
 \end{align*}
 holds on $\left[  \frac{u}{2\sigma} > Y_{\frac{\epsilon}{\sigma}} \right]$, we obtain 
 \begin{align*}
 \left[ \sigma \int_0^T \psi(Y_t)dt < \epsilon \right] \subset \left[  \frac{u}{2\sigma} \leq Y_{\frac{\epsilon}{\sigma}} \right].
 \end{align*}
 Therefore, for any $q \geq 1$ such that $\alpha(q) \geq p$ we have
 \begin{align*}
 \mathbb{P} \left( \int_0^T \sigma \psi(Y_t) dt < \epsilon \right) &\leq \mathbb{P} \left( \frac{u}{2\sigma} \leq Y_{\frac{\epsilon}{\sigma}} \right)\\
 &\leq \left( \frac{u}{2\sigma} \right)^{-q} \mathbb{E}\left[ Y_{\frac{\epsilon}{\sigma}}^q \right]\\
 &\leq  \left( \frac{u}{2\sigma} \right)^{-q} \left( \frac{\epsilon}{\sigma} \right)^{\alpha(q)}\\
 &\leq O(\epsilon^q).
 \end{align*}
\end{proof}

\begin{proposition}
\begin{align*}
\left(D_s^{W_H} \phi \left( \sigma \underset{0 \leq t \leq T}{\sup} (W_t^H- \theta t) \right) \right) \psi(Y_t)=\phi'(\sigma \underset{0\leq t \leq T}{\sup} (W_t^H -\theta t)) \sigma \psi(Y_t)
\end{align*}
\end{proposition}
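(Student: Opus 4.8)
The plan is to peel off the indicator $1_{[0,\tau]}$ produced by the Malliavin derivative of the supremum and to absorb it into the localizing factor $\psi(Y_{\cdot})$ by means of the pathwise bound in part (1) of the preceding Lemma; throughout I read the derivative variable and the argument of $\psi$ as a common time point $t$. First I would invoke the chain rule for $D^{W_H}$: since $\phi\in C_b^{\infty}(\mathbb{R}_{+})$ has bounded derivative and $\sup_{0\le s\le T}(W_s^H-\theta s)\in\mathbb{D}_{W_H}^{1,2}$ by the derivative formula of \cite{Florit}, it gives
\begin{align*}
D_t^{W_H}\phi\!\left(\sigma\sup_{0\le s\le T}(W_s^H-\theta s)\right)=\sigma\,\phi'\!\left(\sigma\sup_{0\le s\le T}(W_s^H-\theta s)\right)D_t^{W_H}\sup_{0\le s\le T}(W_s^H-\theta s).
\end{align*}
Applying the same result to evaluate the last factor as $1_{[0,\tau]}(t)$, where $\tau$ is the (a.s. unique) time at which the supremum is attained, the assertion reduces to showing
\begin{align*}
\phi'\!\left(\sigma\sup_{0\le s\le T}(W_s^H-\theta s)\right)1_{[0,\tau]}(t)\,\psi(Y_t)=\phi'\!\left(\sigma\sup_{0\le s\le T}(W_s^H-\theta s)\right)\psi(Y_t)\quad\text{a.s.},
\end{align*}
that is, that the factor $1_{[0,\tau]}(t)$ may be deleted on the event $\{\phi'\,\psi(Y_t)\neq0\}$.

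On that event I would show $t<\tau$, which forces $1_{[0,\tau]}(t)=1$. The argument combines two observations. First, $\psi(Y_t)\neq0$ forces $Y_t<u/\sigma$ by the choice of $\psi$; and since $Y_t$ equals a nonnegative random variable not depending on $t$ times $t^{m/r}$ with $m/r>0$, the map $t\mapsto Y_t$ is non-decreasing, whence $Y_s\le Y_t<u/\sigma$ for every $s\in[0,t]$. Part (1) of the preceding Lemma then yields $W_s^H-\theta s\le|W_s^H-\theta s|\le Y_s<u/\sigma$ for all $s\in[0,t]$. Second, because $\phi$ is constant on $[0,u]$ we have $\phi'\equiv0$ there, so $\phi'\neq0$ forces $\sigma\sup_{0\le s\le T}(W_s^H-\theta s)>u$, i.e. $\sup_{0\le s\le T}(W_s^H-\theta s)>u/\sigma$. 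Comparing the two, the supremum, being $>u/\sigma$, cannot be attained at any $s\in[0,t]$, on which the process stays strictly below $u/\sigma$; therefore $\tau>t$ and $1_{[0,\tau]}(t)=1$. Off the event $\{\phi'\,\psi(Y_t)\neq0\}$ both sides vanish, so the identity holds almost surely.

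The step I expect to be the crux is precisely this path comparison: one must use the monotonicity of $t\mapsto Y_t$ and the domination $|W_t^H-\theta t|\le Y_t$ together over the entire interval $[0,t]$, not merely at its right endpoint, so that the location $\tau$ of the supremum is genuinely pushed strictly beyond $t$. Subordinate technical points, all available from the cited results, are the a.s. uniqueness of $\tau$ for $H>\frac12$ underlying the derivative formula of \cite{Florit}, and the validity of the chain rule for $F\in\mathbb{D}_{W_H}^{1,2}$ when $\phi$ has bounded derivative.
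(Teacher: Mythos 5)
Your proposal is correct and follows essentially the same route as the paper: chain rule plus the Florit--Nualart derivative formula for the supremum, then a case split in which $\phi'$ vanishes when $\sigma\sup(W^H-\theta\cdot)\le u$, and otherwise $\psi(Y_t)\neq0$ forces $Y_t<u/\sigma<\sup_{0\le s\le T}(W_s^H-\theta s)\le Y_\tau$, so monotonicity of $Y$ gives $t\le\tau$ and the indicator $1_{[0,\tau]}(t)$ can be dropped. The paper compares $Y_t<Y_\tau$ directly while you argue the supremum cannot be attained on $[0,t]$; these are the same idea, and your explicit reading of the derivative variable and the argument of $\psi$ as a common time point matches how the proposition is used in the proof of Theorem 2.
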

\begin{proof}
The proof is analogous to the proof of Gobet and Kohatsu-Higa \cite{gobet}. Let
\begin{align*}
 A:=[0 \leq \sigma \underset{0 \leq t \leq T}{\sup}(W_t^H - \theta t) \leq u].
\end{align*}
Since $\phi'(\sigma \underset{0\leq t \leq T}{\sup} (W_t^H -\theta t))=0$ on $A$ from the assumption of the function $\phi$, we get 
\begin{align*}
\left(D_s^{W_H} \phi \left(\sigma \underset{0 \leq t \leq T}{\sup}(W_t^H - \theta t)\right) \right) \psi(Y_t)&=\phi' \left( \sigma \underset{0\leq t \leq T}{\sup} (W_t^H -\theta t)\right) \sigma 1_{[s \leq \tau]} \psi(Y_t)\\
&=0\\
&=\phi'\left(\sigma \underset{0\leq t \leq T}{\sup} (W_t^H -\theta t)\right) \sigma \psi(Y_t),
\end{align*}
on $A$. On the other hand, on $A^c=[\sigma \underset{0\leq t \leq T}{\sup} (W_t^H -\theta t) > u]$, for any $\omega \in A^c \cap [\phi(Y_t) \neq 0]$ we have
\begin{align*}
\sigma \underset{0\leq t \leq T}{\sup} (W_t^H(\omega) -\theta t) > u, \quad Y_t(\omega) <\frac{u}{\sigma}.
\end{align*}
Therefore, we get
\begin{align*}
Y_t(\omega) < \frac{u}{\sigma} <  \underset{0\leq t \leq T}{\sup} (W_t^H(\omega) -\theta t))=W_{\tau}^H(\omega) - \theta \tau(\omega) \leq Y_{\tau} (\omega),
\end{align*}
so we have $t\leq \tau$ since $Y_t$ is an non-decreasing process. 
\end{proof}
In the sequel, we consider the smoothness of $u_A$ in the Malliavin sense. Let $\tilde{K}_H^*$ be the restriction of $K_H ^*$ to $L^2(0,T)$, and let $\tilde{K}_H^{*,adj}$ be the adjoint operator of $\tilde{K}_H^*$ in $L^2(0,T)$. Then, by Lanjri and Nualart \cite{Nualart3}, we have
 \begin{align*}
& \tilde{K}_H^{*,adj -1}(\psi(Y_{\cdot})(t)\\
&=d_Ht^{H-\frac{1}{2}}D_{0+}^{H-\frac{1}{2}}t^{\frac{1}{2}-H}\psi(Y_t)\\
&=\frac{d_H}{\Gamma(\frac{3}{2}-H)} \left( t^{\frac{1}{2}-H}\psi(Y_t)-\left(H-\frac{1}{2} \right) t^{H-\frac{1}{2}}\int_0^t \frac{t^{\frac{1}{2}-H}\psi(Y_t)-s^{\frac{1}{2}-H}\psi(Y_s)}{(t-s)^{H+\frac{1}{2}}}ds\right),
 \end{align*}
 and
 \begin{align*}
&\tilde{K}_H^{*-1}(u_{\cdot})(t)\\
&=\frac{1}{c_H \Gamma \left(H-\frac{1}{2} \right)} t^{\frac{1}{2}-H}D_{T-}^{H-\frac{1}{2}}\left( (\cdot)^{H-\frac{1}{2}}u(\cdot) \right)(t)\\
&=\frac{1}{c_H B\left(H-\frac{1}{2}, \frac{3}{2}-H\right)} \left\{ \frac{u_t}{(T-t)^{H-\frac{1}{2}}}+\left(H-\frac{1}{2}\right) t^{\frac{1}{2}-H} \int_t^T \frac{s^{H-\frac{1}{2}}u_s-t^{H-\frac{1}{2}}u_t}{(s-t)^{H+\frac{1}{2}}}ds\right\},
 \end{align*}
 where $d_H=\left(c_H\Gamma \left( H-\frac{1}{2}\right) \right)^{-1}$.
 So we get $u_A=\tilde{K}_H^{* -1} \circ \tilde{K}_H^{*,adj -1} (\psi(Y_{\cdot}) )$. Since $\tilde{K}_H^*:L^2(0,T)\rightarrow K_H^*[L^2(0,T)]$ is the  isometric isomorphism, the following lemma holds.
\begin{lemma}
\label{u}
For any stochastic process $u_t$ we have
\begin{align*}
u_{\cdot} \in\mathbb{D}_{K_H^*[L^2(0,T)]}^{1,p}(K_H^*[L^2(0,T)]) \Rightarrow \tilde{K}_H^{* -1}(u_{\cdot}) \in \mathbb{D}_{W^H}^{1,p}(\mathcal{H}).
\end{align*}
\end{lemma}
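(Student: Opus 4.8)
The strategy is to read the statement as an instance of the general transfer principle for Malliavin--Sobolev spaces under an isometric isomorphism of the underlying Hilbert space---here the map $\tilde{K}_H^{*}$---combined with the relation $K_H^{*}D^{W^H}F=D^{L^2([0,T])}F$ recalled above. First I would set up the Gaussian structure attached to $K_H^*[L^2(0,T)]$: since $\tilde{K}_H^{*}:L^2(0,T)\to K_H^*[L^2(0,T)]$ is an isometric isomorphism, the family $W^{K}(w):=W\big(\tilde{K}_H^{*-1}(w)\big)$, $w\in K_H^*[L^2(0,T)]$, is an isonormal Gaussian process on $K_H^*[L^2(0,T)]$, and, since $L^2(0,T)$ is dense in $\mathcal H$, it generates the same $\sigma$-algebra $\mathcal G$ as $W$. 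Thus $\mathbb{D}^{1,p}_{K_H^*[L^2(0,T)]}(K_H^*[L^2(0,T)])$ and $\mathbb{D}^{1,p}_{W^H}(\mathcal H)$ are built over one and the same probability space, and it only remains to transport the seminorm through $\tilde{K}_H^{*-1}$.

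Next I would verify the norm identity on the dense class of smooth processes. Take $u_\cdot=\sum_j F_j w_j\in\mathcal S_{K_H^*[L^2(0,T)]}(K_H^*[L^2(0,T)])$, with $w_j\in K_H^*[L^2(0,T)]$ and each $F_j$ a smooth functional of the $W^{K}$'s, and set $v_\cdot:=\tilde{K}_H^{*-1}(u_\cdot)=\sum_j F_j\,\tilde{K}_H^{*-1}(w_j)$, which is an $\mathcal H$-valued smooth process because $\tilde{K}_H^{*-1}(w_j)\in L^2(0,T)\subset\mathcal H$. The key observation is that for these functionals $D^{W^H}F_j\in L^2(0,T)$, so the recalled identity yields $D^{W^{K}}F_j=\tilde{K}_H^{*}\,D^{W^H}F_j=K_H^{*}\,D^{W^H}F_j=D^{L^2([0,T])}F_j$. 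Expanding the Hilbert--Schmidt norm of $D^{W^H}v_\cdot=\sum_j (D^{W^H}F_j)\otimes\tilde{K}_H^{*-1}(w_j)$ and using that $\tilde{K}_H^{*}$ preserves inner products on both the derivative factor (through the previous identity) and the value factor (through $\|\tilde{K}_H^{*-1}(w_j)\|_{\mathcal H}=\|w_j\|_{K_H^*[L^2(0,T)]}$), one finds that it equals the Hilbert--Schmidt norm of $D^{W^{K}}u_\cdot$; together with the matching of the zeroth-order terms this gives $\|v_\cdot\|_{1,p,\mathcal H}=\|u_\cdot\|_{1,p,K_H^*[L^2(0,T)]}$ on the smooth class.

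Finally I would pass to the limit. Given $u_\cdot\in\mathbb{D}^{1,p}_{K_H^*[L^2(0,T)]}(K_H^*[L^2(0,T)])$, choose smooth $u^{(n)}_\cdot\to u_\cdot$ for $\|\cdot\|_{1,p,K_H^*[L^2(0,T)]}$; the isometric identity of the previous step makes $\big(\tilde{K}_H^{*-1}(u^{(n)}_\cdot)\big)_n$ a Cauchy sequence for $\|\cdot\|_{1,p,\mathcal H}$, hence convergent in $\mathbb{D}^{1,p}_{W^H}(\mathcal H)$, and the closedness of $D^{W^H}$ together with the continuity of $\tilde{K}_H^{*-1}$ identifies the limit with $\tilde{K}_H^{*-1}(u_\cdot)$, which establishes the implication.

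The step I expect to be the main obstacle is the simultaneous bookkeeping of the two changes that $\tilde{K}_H^{*-1}$ performs: the change of the underlying isonormal Gaussian structure, which acts on the derivative (isonormal) tensor factor and is governed by the identity $K_H^{*}D^{W^H}=D^{L^2([0,T])}$, and the change of the target Hilbert space of the values, which acts on the value factor. In particular one must check that the derivatives of the $W^{K}$-smooth functionals land in $L^2(0,T)\subset\mathcal H$, where $\tilde{K}_H^{*}$ is genuinely invertible, so that the three norms on $\mathcal H$, on $L^2(0,T)$, and on $K_H^*[L^2(0,T)]$ line up and the completions defining the two $\mathbb{D}^{1,p}$-spaces are carried into one another.
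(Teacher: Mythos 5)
Your proposal is correct and follows essentially the same route as the paper: both arguments first establish the identity $D^{W^H}\bigl(\tilde{K}_H^{*-1}(u)\bigr)=\tilde{K}_H^{*-1}\otimes\tilde{K}_H^{*-1}\bigl(D^{K_H^*[L^2(0,T)]}u\bigr)$ on simple tensors $u=Fv$ via the transfer relation $K_H^{*}D^{W^H}=D^{L^2(0,T)}$, and then pass to general $u$ by approximation, using that $\tilde{K}_H^{*}$ is an isometric isomorphism to control both the zeroth-order and derivative terms and invoking the closedness of $D^{W^H}$ to identify the limit. Your additional bookkeeping (the isonormal process $W^K$, equality of the generated $\sigma$-algebras, and the explicit Sobolev-norm identity) is a more careful spelling-out of what the paper leaves implicit, not a different argument.
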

\begin{proof}
When $u_{\cdot}=Fv $ holds for $F \in \mathbb{D}_{K_H^*[L^2(0,T)]}^{1,p}$ and $ v \in K_H^*[L^2(0,T)]\subset L^2(0,T)$, we have
\begin{align*}
D^{W^H}(\tilde{K}_H^{* -1}(u_{\cdot}))&=D^{W^H}(F\tilde{K}_H^{* -1}(v_{\cdot}))\\
&=D^{W^H}F \otimes \tilde{K}_H^{* -1}(v)\\
&=\tilde{K}_H^{* -1}(D^{K_H^*[L^2(0,T)]}F) \otimes  \tilde{K}_H^{* -1}(v)\\
&=\tilde{K}_H^{* -1} \otimes \tilde{K}_H^{* -1}(D^{K_H^*[L^2(0,T)]}F\otimes v)\\ 
&=\tilde{K}_H^{* -1} \otimes \tilde{K}_H^{* -1}(D^{K_H^*[L^2(0,T)]}u_{\cdot}).
\end{align*}
When $u \in \mathbb{D}_{K_H^*[L^2(0,T)]}^{1,p}(K_H^*[L^2(0,T)])$, there exists $u_n \in S_{K_H^*[L^2(0,T)]}(K_H^*[L^2(0,T)])$ such that $u_n \rightarrow u \quad in \quad\mathbb{D}_{K_H^*[L^2(0,T)]}^{1,p}(K_H^*[L^2(0,T)])$, and we have
\begin{align*}
\tilde{K}_H^{* -1}u_n \rightarrow \tilde{K}_H^{* -1}u \quad in \quad L^p(\Omega; \mathcal{H}),
\end{align*}
since $\tilde{K}_H^{*} $ is the isometric isomorphism. Thus we have
\begin{align*}
&\left\|D^{W^H}(\tilde{K}_H^{* -1}(u_n))-\tilde{K}_H^{* -1} \otimes \tilde{K}_H^{* -1}(D^W(u)\right\|_{L^p(\Omega;\mathcal{H}\otimes \mathcal{H})}\\
&=\left\| \tilde{K}_H^{* -1} \otimes \tilde{K}_H^{* -1}(D^{K_H^*[L^2(0,T)]}u_n-D^{K_H^*[L^2(0,T)]}u)\right\|_{L^p(\Omega;\mathcal{H}\otimes \mathcal{H})}\\
&\leq \left\|D^{K_H^*[L^2(0,T)]}u_n- D^{K_H^*[L^2(0,T)]}u\right\|_{L^p(\Omega; L^2(0,T) \otimes L^2(0,T))} \\
% \tilde{K}_H^{* -1} \otimes \tilde{K}_H^{* -1} : K_H^{*}[L^2(0,T)] \otimes K_H^*[L^2(0,T)] \rightarrow L^2(0,T) \otimes L^2(0,T) : bounded)\\
&\rightarrow 0,
\end{align*}
and so we obtain $K_H^{* -1}(u_{\cdot}) \in \mathbb{D}_{W^H}^{1,p}(\mathcal{H})$.
\end{proof}

\begin{proposition}
 For $u_A$ given in \eqref{ua}, it holds that 
\begin{align*}
u_A \in \mathbb{D}_{W^H}^{1, \infty}(\mathcal{H}).
\end{align*}
\end{proposition}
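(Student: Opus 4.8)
The plan is to carry the Malliavin smoothness of the process $\psi(Y_{\cdot})$ through the two deterministic operators appearing in the factorization
\[
u_A=\tilde{K}_H^{*-1}\circ\tilde{K}_H^{*,adj-1}\bigl(\psi(Y_{\cdot})\bigr)
\]
established above. Writing $w_{\cdot}:=\tilde{K}_H^{*,adj-1}(\psi(Y_{\cdot}))$, so that $u_A=\tilde{K}_H^{*-1}(w_{\cdot})$, Lemma~\ref{u} reduces the claim to showing
\[
w_{\cdot}\in\mathbb{D}^{1,p}_{K_H^*[L^2(0,T)]}\bigl(K_H^*[L^2(0,T)]\bigr)\qquad\text{for every }p\ge1 .
\]

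First I would establish the base regularity: the map $t\mapsto\psi(Y_t)$ defines an element of $\mathbb{D}^{1,\infty}_{W^H}(L^2(0,T))$. By the smoothness statement for $\psi(Y_t)$ (part~(2) of the lemma on $Y_t$), $\psi(Y_t)\in\mathbb{D}^{1,\infty}_{W^H}$ for each $t$, and the proof of that part provides $L^p(\Omega;\mathcal{H})$-bounds on $D^{W^H}\psi(Y_t)$ that are uniform in $t\in[0,T]$ (indeed $D^{W^H}\psi(Y_t)=\psi'(Y_t)D^{W^H}Y_t$ vanishes for small $t$ since $\psi'\equiv0$ near the origin and $\mathbb{E}[Y_t^q]\le C_q t^{\alpha(q)}$). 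Since $\psi$ is bounded and $[0,T]$ is finite, Minkowski's integral inequality then yields finiteness of $\mathbb{E}\bigl[\|\psi(Y_{\cdot})\|_{L^2(0,T)}^p\bigr]$ and $\mathbb{E}\bigl[\|D^{W^H}\psi(Y_{\cdot})\|_{\mathcal{H}\otimes L^2(0,T)}^p\bigr]$ for all $p$, which is exactly membership in $\mathbb{D}^{1,\infty}_{W^H}(L^2(0,T))$.

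Next I would transfer this smoothness through the operators. Both $\tilde{K}_H^{*,adj-1}$ and $\tilde{K}_H^{*-1}$ act only on the time variable and are deterministic, hence they commute with $D^{W^H}$ exactly as in the proof of Lemma~\ref{u}: the identity $D^{W^H}(\tilde{K}_H^{*-1}u_{\cdot})=\tilde{K}_H^{*-1}\otimes\tilde{K}_H^{*-1}(D^{K_H^*[L^2(0,T)]}u_{\cdot})$ is first verified on simple processes $u_{\cdot}=Fv$ and then extended to all of $\mathbb{D}^{1,p}$ by the isometric-isomorphism property together with the closedness of $D$. Applying the same device to $\tilde{K}_H^{*,adj-1}$—whose boundedness on the relevant scale is the delicate point discussed below—would yield $w_{\cdot}\in\mathbb{D}^{1,\infty}_{K_H^*[L^2(0,T)]}(K_H^*[L^2(0,T)])$, after which Lemma~\ref{u} gives $u_A=\tilde{K}_H^{*-1}(w_{\cdot})\in\mathbb{D}^{1,\infty}_{W^H}(\mathcal{H})$, as required.

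The main obstacle lies in the interaction between the singular operators and the randomness. On the one hand, one must produce $t$-integrable $L^p(\Omega;\mathcal{H})$-moment bounds for the Malliavin derivative of $\psi(Y_t)$, which entails differentiating the functional $Y_t$ of \eqref{y} (a double integral raised to the power $1/r$) and controlling the outcome in every $L^p$. On the other hand, and more delicately, the fractional-derivative operators $D^{H-\frac{1}{2}}_{0+}$ and $D^{H-\frac{1}{2}}_{T-}$ entering $\tilde{K}_H^{*,adj-1}$ and $\tilde{K}_H^{*-1}$ (cf.\ the explicit form \eqref{ua}) are singular, so justifying both their action and their commutation with $D^{W^H}$ requires controlling increments such as $t^{\frac{1}{2}-H}\psi(Y_t)-s^{\frac{1}{2}-H}\psi(Y_s)$ and their Malliavin derivatives near the diagonal $s\to t$, uniformly in the $L^p(\Omega)$ sense. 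This is precisely where the Lipschitz regularity of $\psi$ and the H\"older-type regularity and moment control of $Y$ supplied by the lemma on $Y_t$ become essential.
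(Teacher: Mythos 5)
Your overall architecture coincides with the paper's: both reduce the claim, via the factorization $u_A=\tilde{K}_H^{*-1}\circ\tilde{K}_H^{*,adj-1}(\psi(Y_{\cdot}))$ and Lemma~\ref{u}, to establishing Malliavin regularity of the image of $\psi(Y_{\cdot})$ under the fractional-derivative operators. However, there is a genuine gap: the step you defer as ``the main obstacle'' --- justifying the action of the singular operators $D_{0+}^{H-\frac{1}{2}}$ and $D_{T-}^{H-\frac{1}{2}}$ on $\psi(Y_{\cdot})$ and their commutation with $D^{W^H}$ in every $L^p(\Omega)$ --- is not an obstacle to be noted but \emph{is} the entire content of the proof. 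Concretely, after the reduction the only nontrivial object is the singular integral
\begin{align*}
\int_0^t \frac{t^{\frac{1}{2}-H}\psi(Y_t)-s^{\frac{1}{2}-H}\psi(Y_s)}{(t-s)^{H+\frac{1}{2}}}\,ds ,
\end{align*}
and one must show that it and its Malliavin derivative are well defined as $L^p(\Omega)$ limits. The paper does this by splitting the integral at $t/2$, approximating each piece by Riemann sums $a_n$ and $b_n$, dominating the summands through the pathwise H\"older-type bound $|\psi(Y_t)-\psi(Y_s)|\underset{\sim}{<}A_T|t-s|^{\frac{m}{r}}$ with $A_T\in\bigcap_{p}L^p(\Omega)$ (the Garsia--Rodemich--Rumsey-type functional built into the definition of $Y$), proving $L^p(\Omega)$ convergence of both $a_n,b_n$ and $D^{W^H}a_n,D^{W^H}b_n$, and finally invoking closedness of $D^{W^H}$. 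None of these estimates appear in your proposal; without them the claim that $\tilde{K}_H^{*,adj-1}(\psi(Y_{\cdot}))$ even lies in $L^p(\Omega;L^2(0,T))$, let alone in the Malliavin--Sobolev space, is unsupported, since $\tilde{K}_H^{*,adj-1}$ is an unbounded (fractional differentiation) operator and the density-plus-closedness device of Lemma~\ref{u} cannot be applied to it as a black box.

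A secondary imprecision: your ``base regularity'' step only gives $\psi(Y_{\cdot})\in\mathbb{D}^{1,\infty}_{W^H}(L^2(0,T))$, which is the input the paper also takes from Lemma~1(2), but it does not by itself control the near-diagonal increments $t^{\frac{1}{2}-H}\psi(Y_t)-s^{\frac{1}{2}-H}\psi(Y_s)$ against the nonintegrable kernel $(t-s)^{-H-\frac{1}{2}}$; pointwise-in-$t$ membership in $\mathbb{D}^{1,\infty}_{W^H}$ with uniform moment bounds is strictly weaker than the quantitative H\"older regularity in $t$ that the convergence of the Riemann sums requires. To complete the argument you would need to make explicit the bound $|D^{W^H}(\psi(Y_t)-\psi(Y_s))|\underset{\sim}{<}(A_T+|D^{W^H}A_T|)\,|t-s|^{\frac{m}{r}}$ (or a variant thereof) and verify that $\frac{m}{r}-H-\frac{1}{2}>-1$ under the standing condition $rH>m+2$, which is exactly what the paper's computation with $a_n$, $b_n$, $f_n^1$, $f_n^2$, $f_n^3$ accomplishes.
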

\begin{proof}
It is sufficient to show that $\int_0^t \frac{t^{\frac{1}{2}-H}\psi(Y_t)-s^{\frac{1}{2}-H}\psi(Y_s)}{(t-s)^{H+\frac{1}{2}}}ds \in \mathbb{D}_{W^H}^{1, \infty}\left(\tilde{K}_H^*\left(L^2(0,T)\right)\right)$ from Lemma \ref{u} and $\psi(Y_t) \in  \mathbb{D}_{W^H}^{1,p}(\mathcal{H})$. Defining  $a_n$ and $b_n$ by
\begin{align*}
a_n+b_n
&:=\sum_{i=0}^{n-1} \frac{t^{\frac{1}{2}-H}\psi(Y_t) - \left(\frac{t}{2}+\frac{it}{2n} \right)^{\frac{1}{2}-H}\psi(Y_{\frac{t}{2}+\frac{it}{2n}})}{\left( t-\left(\frac{t}{2}+\frac{it}{2n}\right) \right)^{H+\frac{1}{2}}}\frac{t}{2n} \\
&\quad + \sum_{i=0}^{n-1} \frac{t^{\frac{1}{2}-H}\psi(Y_t) - \left(\frac{it}{2n}\right)^{\frac{1}{2}-H}\psi(Y_{\frac{it}{2n}})}{\left( t-\left(\frac{it}{2n}\right) \right)^{H+\frac{1}{2}}} \frac{t}{2n},
\end{align*}
then we can show
\begin{align*}
a_n &\rightarrow \int_{\frac{t}{2}}^t \frac{t^{\frac{1}{2}-H}\psi(Y_t)-s^{\frac{1}{2}-H}\psi(Y_s)}{(t-s)^{H+\frac{1}{2}}}ds \quad \text{a.s.},\\
b_n &\rightarrow \int_0^{\frac{t}{2}} \frac{t^{\frac{1}{2}-H}\psi(Y_t)-s^{\frac{1}{2}-H}\psi(Y_s)}{(t-s)^{H+\frac{1}{2}}}ds \quad \text{a.s.},
\end{align*}
as $n \rightarrow \infty$. We first show that 
\begin{align*}
a_n &\xrightarrow{L^p(\Omega)} \int_{\frac{t}{2}}^t \frac{t^{\frac{1}{2}-H}\psi(Y_t)-s^{\frac{1}{2}-H}\psi(Y_s)}{(t-s)^{H+\frac{1}{2}}}ds \quad (n\rightarrow \infty),
\end{align*}
holds and that $D^{W_H}a_n$ converges in $L^p(\Omega)$. Defining
\begin{align*}
A_T:=8\left(4 \int_0^T \int_0^T \frac{|W_s^H- \theta s -(W_u^H- \theta u)|^r}{|s-u|^{m+2}}dsdu \right)^{\frac{1}{r}} \frac{m+2}{m},
\end{align*}
we can evaluate as
\begin{align*}
|a_n|
&\leq \left| \sum_{i=0}^{n-1} \frac{t^{\frac{1}{2}-H}\psi(Y_t) - \left(\frac{t}{2}+\frac{it}{2n} \right)^{\frac{1}{2}-H}\psi(Y_t)}{\left( t-\left(\frac{t}{2}+\frac{it}{2n}\right) \right)^{H+\frac{1}{2}}}\frac{t}{2n} \right|\\
&+\left| \sum_{i=0}^{n-1} \frac{\left(\frac{t}{2}+\frac{it}{2n}\right)^{\frac{1}{2}-H}\psi(Y_t) - \left(\frac{t}{2}+\frac{it}{2n} \right)^{\frac{1}{2}-H}\psi(Y_{\frac{t}{2}+\frac{it}{2n}})}{\left( t-\left(\frac{t}{2}+\frac{it}{2n}\right) \right)^{H+\frac{1}{2}}}\frac{t}{2n}\right|\\
&\underset{\sim}{<} \left| \sum_{i=0}^{n-1} \frac{\left(\frac{t}{2} \right)^{-\frac{1}{2}-H}\psi(Y_t)}{\left( t-\left(\frac{t}{2}+\frac{it}{2n}\right) \right)^{H-\frac{1}{2}}}\frac{t}{2n} \right| + \left| \sum_{i=0}^{n-1} \frac{\left(\frac{t}{2} \right)^{\frac{m}{r}-\frac{1}{2}-H}A_T}{\left( t-\left(\frac{t}{2}+\frac{it}{2n}\right) \right)^{H-\frac{1}{2}}}\frac{t}{2n} \right|\\
&\underset{\sim}{<} \left| \sum_{i=0}^{n-1} \frac{\psi(Y_t)}{\left( t-\left(\frac{t}{2}+\frac{it}{2n}\right) \right)^{H-\frac{1}{2}}}\frac{t}{2n} \right| + \left| \sum_{i=0}^{n-1} \frac{A_T}{\left( t-\left(\frac{t}{2}+\frac{it}{2n}\right) \right)^{H-\frac{1}{2}}}\frac{t}{2n} \right|\\
&=: f_n^1(t)+f_n^2(t).
\end{align*}
Note that  $\left(f_n^1(t)\right)^p \xrightarrow{L^1(\Omega)} \left(\int_{\frac{t}{2}}^t \frac{\psi(Y_t)}{(t-s)^{H-\frac{1}{2}}}ds \right)^p$. Indeed, we have 
\begin{align*}
&\left\| f_n^1(t)- \int_{\frac{t}{2}}^t \frac{\psi(Y_t)}{(t-s)^{H-\frac{1}{2}}}ds \right\|_{L^p(\Omega)}\\
&\leq \|\psi(Y_t)\|_{L^p(\Omega)} \left| \sum_{i=0}^{n-1} \frac{1}{\left(t-\left(\frac{t}{2}+\frac{it}{2n}\right)\right)^{H-\frac{1}{2}}}\frac{t}{2n} -\int_{\frac{t}{2}}^t \frac{1}{(t-s)^{H-\frac{1}{2}}}ds \right|\\
&\rightarrow 0 \quad(n \rightarrow \infty).
\end{align*}
 Similarly, $\left(f_n^2\right)^p \xrightarrow{L^1(\Omega)} \left(\int_{\frac{t}{2}}^t \frac{A_T}{(t-s)^{H-\frac{1}{2}}}ds \right)^p$ holds, so we obtain
 \begin{align}
 \label{an}
 a_n \xrightarrow{L^p(\Omega)}  \int_{\frac{t}{2}}^t \frac{t^{\frac{1}{2}-H}\psi(Y_t) - s^{\frac{1}{2}-H}\psi(Y_s)}{\left( t-s \right)^{H+\frac{1}{2}}}ds \quad(n\rightarrow \infty).
 \end{align}
Also, since
\begin{align*}
D^{W^H}a_n=\sum_{i=0}^{n-1} \frac{D^{W^H}A_T\left( t^{\frac{1}{2}+\frac{m}{r}-H}\psi'(Y_t) - \left(\frac{t}{2}+\frac{it}{2n} \right)^{\frac{1}{2}+\frac{m}{r}-H}\psi'(Y_{\frac{t}{2}+\frac{it}{2n}})\right)}{\left( t-\left(\frac{t}{2}+\frac{it}{2n}\right) \right)^{H+\frac{1}{2}}}\frac{t}{2n},
\end{align*}
we have
\begin{align*}
\left| D^{W^H}a_n\right| &\underset{\sim}{<}  \left| \sum_{i=0}^{n-1} \frac{D^{W^H}A_T\left( \frac{t}{2} \right)^{\frac{m}{r}-H-\frac{1}{2}} \psi '(Y_t)}{\left(t-\left(\frac{t}{2}+\frac{it}{2n}\right) \right)^{H-\frac{1}{2}}} \right|+\left|\sum_{i=0}^{n-1} \frac{D^{W^H}A_T\left(\frac{t}{2}\right)^{2\frac{m}{r}-H-\frac{1}{2}}A_T}{(t-s)^{H-\frac{1}{2}}}\right|\\
&\underset{\sim}{<} \left| \sum_{i=0}^{n-1} \frac{D^{W^H}A_T\psi '(Y_t)}{\left(t-\left(\frac{t}{2}+\frac{it}{2n}\right) \right)^{H-\frac{1}{2}}} \right|+\left|\sum_{i=0}^{n-1} \frac{A_TD^{W^H}A_T}{(t-s)^{H-\frac{1}{2}}}\right|.
\end{align*}
Thus we get
\begin{align*}
D^{W^H}a_n \xrightarrow{L^p(\Omega)} \int_{\frac{t}{2}}^t \frac{D^{W^H}A_T \left(t^{\frac{m}{r}+\frac{1}{2}-H}\psi'(Y_t) - s^{\frac{m}{r}+\frac{1}{2}-H}\psi'(Y_s)\right)}{\left( t-s \right)^{H+\frac{1}{2}}}ds \quad(n\rightarrow \infty)
\end{align*}
in the same way as in $(\ref{an})$. Second, we show that 
\begin{align*}
b_n &\xrightarrow{L^p(\Omega)} \int_0^{\frac{t}{2}} \frac{t^{\frac{1}{2}-H}\psi(Y_t)-s^{\frac{1}{2}-H}\psi(Y_s)}{(t-s)^{H+\frac{1}{2}}}ds \quad (n\rightarrow \infty),
\end{align*}
holds and that $D^{W_H}b_n$ converges in $L^p(\Omega)$. Since it holds that
\begin{align}
\left|b_n \right| &\leq \left(\frac{2}{t} \right)^{H+\frac{1}{2}} \left\{ \sum _{i=0}^{n-1} \left|t^{\frac{1}{2}-H} \psi(Y_t) -\left(\frac{it}{2n}\right)^{\frac{1}{2}-H}\psi(Y_{\frac{it}{2n}})\right| \right\} \frac{t}{2n} \nonumber \\
&\underset{\sim}{<} \left(\frac{2}{t} \right)^{H+\frac{1}{2}} \left\{ t^{\frac{1}{2}-H}\psi(Y_t) + \sum_{i=0}^n \left( \frac{it}{2n} \right)^{\frac{m}{r}+\frac{1}{2}-H}A_T\frac{t}{2n} \right\}\nonumber \\
\label{bn}
&=:\left(\frac{2}{t} \right)^{H+\frac{1}{2}} f_n^3(t),
\end{align}
we have
\begin{align*}
&\left\| f_n^3(t) - \int_0^{\frac{t}{2}} \left(t^{\frac{1}{2}-H}\psi(Y_t) - s^{\frac{1}{2}-H}A_T \right) ds \right\|_{L^p(\Omega)} \\
&\underset{\sim}{<} \|A_T\|_{L^p(\Omega)} \left| \sum_{i=0}^{n-1} \left(\frac{it}{2n} \right)^{\frac{m}{r}+\frac{1}{2}-H}\frac{t}{2n}-\int_0^{\frac{t}{2}} s^{\frac{m}{r}+\frac{1}{2}-H}ds\right|\\
&\rightarrow 0 \quad(n \rightarrow \infty).
\end{align*}
Thus we have $b_n \xrightarrow{L^p(\Omega)} \int_0^{\frac{t}{2}} \frac{t^{\frac{1}{2}-H} \psi(Y_t) -s^{\frac{1}{2}-H}\psi(Y_s)}{(t-s)^{H+\frac{1}{2}}}ds$. From $(\ref{bn})$, we obtain 
\begin{align*}
D^{W^H}b_n \xrightarrow{L^p(\Omega)} \int_0^{\frac{t}{2}} \frac{D^{W^H}A_T \left(t^{\frac{m}{r}+\frac{1}{2}-H}\psi'(Y_t) - s^{\frac{m}{r}+\frac{1}{2}-H}\psi'(Y_s)\right)}{\left( t-s \right)^{H+\frac{1}{2}}}ds \quad(n\rightarrow \infty)
\end{align*}
in the same way as for $a_n$.  Therefore, since $D^{W_H}$ is a closable operator, we have \[
\int_0^t \frac{t^{\frac{1}{2}-H}\psi(Y_t)-s^{\frac{1}{2}-H}\psi(Y_s)}{(t-s)^{H+\frac{1}{2}}}ds \in \mathbb{D}_{W^H}^{1, \infty}.
\]
\end{proof}

\begin{proof}[Proof of Theorem 2]
From Proposition 1 we have
\begin{align*}
&\phi'(\sigma \underset{0\leq t \leq T}{\sup} (W_t^H -\theta t)) \sigma\left\langle 1_{[0,\tau]},u_A\right\rangle_{\mathcal{H}}\\
&=\phi'(\sigma \underset{0\leq t \leq T}{\sup} (W_t^H -\theta t)) \sigma\left\langle \tilde{K}_H^*(1_{[0,\tau]}),  \tilde{K}_H^{*,adj -1} (\psi(Y_{\cdot}) ) \right\rangle_{L^2(0,T)}\\
&=\phi'(\sigma \underset{0\leq t \leq T}{\sup} (W_t^H -\theta t)) \sigma\left\langle 1_{[0,\tau]}, \psi(Y_{\cdot}) \right\rangle_{L^2(0,T)}\\
&=\phi'(\sigma \underset{0\leq t \leq T}{\sup} (W_t^H -\theta t)) \sigma\int_0^T \psi(Y_t)dt.
\end{align*}
Thus, since it holds that
\begin{align*}
&\frac{\partial}{\partial \sigma} \mathbb{E}\left[\phi\left(\sigma \sup_{0 \leq t \leq T}(W_t^H- \theta t) \right) \right]\\
&=\mathbb{E}\left[ \phi'(\sigma \underset{0\leq t \leq T}{\sup}(W_t^H- \theta t))\underset{0\leq t \leq T}{\sup}(W_t^H- \theta t)\right]\\
&=\mathbb{E}\left[ \phi'(\sigma \underset{0\leq t \leq T}{\sup}(W_t^H- \theta t))\underset{0\leq t \leq T}{\sup}(W_t^H- \theta t) \frac{\int_0^T \psi(Y_t)dt}{\int_0^T \psi(Y_t)dt} \right]\\
&=\mathbb{E} \left[ \phi'(\sigma \underset{0\leq t \leq T}{\sup}(W_t^H- \theta t)) \sigma \underset{0\leq t \leq T}{\sup}(W_t^H- \theta t) \frac{\left\langle 1_{[0,\tau]}, u_A(\cdot) \right\rangle_{\mathcal{H}}}{\sigma\int_0^T \psi(Y_t)dt} \right]\\
&=\mathbb{E} \left[ \left\langle D^{W^H}(\phi(\sigma \sup_{0\leq t \leq T} (W_t^H- \theta t))),   \frac{u_{A}(\cdot) \underset{0 \leq t \leq T}{\sup}(W_t^H- \theta t)}{\sigma \int_0^T \psi(Y_t)dt} \right\rangle_{\mathcal{H}} \right],
\end{align*}
and we have $ \frac{u_{A}(\cdot) \underset{0 \leq t \leq T}{\sup}(W_t^H- \theta t)}{\sigma \int_0^T \psi(Y_t)dt} \in \text{Dom} \delta^{W^H}$, we get $(\ref{submain})$. Next we show that $(\ref{main result})$ holds. Defining $g_n: \mathbb{R}_{+} \rightarrow \mathbb{R}$ by
\begin{align*}
g_n:=1_{[u+\frac{1}{n}, u+n+\frac{1}{n}]}*\rho_{\frac{1}{n}} \quad(n\geq2,\  \rho : \text{molifier}),
\end{align*}
then we have
\begin{itemize}
\item[1] $g_n \in C_b^{\infty}(\mathbb{R})$,
\item[2] $g_n(x)=0 \quad \text{on} \  [0,u]$.
\end{itemize}
Also we get $g_n \rightarrow 1_{(u, \infty)}$ as $ n \rightarrow \infty$ and
\begin{align*}
f_n(\sigma)&:=\mathbb{E}[g_n(\sigma \underset{0 \leq t \leq T}{\sup}(W_t^H- \theta t))]\\
&\rightarrow \mathbb{E}[1_{(u, \infty)}(\sigma \underset{0 \leq t \leq T}{\sup}(W_t^H- \theta t))] \\
&=\mathbb{E}[1_{[u, \infty)}(\sigma \underset{0 \leq t \leq T}{\sup}(W_t^H- \theta t))] .
\end{align*}
Therefore, for any compact set $K \subset \mathbb{R}_{+}$, we have
\begin{align}
&\left| \underset{\sigma \in K}{\sup} \left( \frac{\partial}{\partial \sigma}f_n(\sigma) - \mathbb{E}\left[ 1_{[u, \infty)}(\sigma \underset{0 \leq t \leq T}{\sup}(W_t^H- \theta t))\delta^{W^H}\left( \frac{u_A(\cdot) \underset{0 \leq t \leq T}{\sup}(W_t^H- \theta t)}{\int_0^T \sigma  \psi(Y_t) dt}  \right)\right] \right) \right| \nonumber\\
&\leq \frac{1}{\underset{\sigma \in K}{\inf} \sigma} \left\| \delta^{W^H} \left( \frac{u_A(\cdot) \underset{0 \leq t \leq T}{\sup}(W_t^H- \theta t)}{\int_0^T  \sigma \psi(Y_t) dt} \right)\right\|_{L^2(\Omega)} \underset{\sigma \in K}{\sup} \mathbb{E}\left[ \left( g_n(\sigma V_T^*) - 1_{[u, \infty)} \right)^2\right]^{\frac{1}{2}}  \nonumber\\
&\underset{\sim}{<}  \underset{\sigma \in K}{\sup} \left\{ \mathbb{P} \left( \sigma V_T^* \in [u, u+\frac{2}{n}] \right) + \mathbb{P} \left( \sigma V_T^* \in [u+n, \infty ) \right) \right\}\nonumber \\
\label{main result2}
&=:\underset{\sigma \in K}{\sup}\left\{ h_n^1(\sigma) +h_n^2(\sigma)\right\}.
\end{align}
In considering the compact uniform convergence of $h_n^1(\sigma)$ and $h_n^2(\sigma)$ with respect to $\sigma$,  it is sufficient to show only the continuity of $h_n^1(\sigma)$ and $h_n^2(\sigma)$ with respect to $\sigma$ by the Dini theorem. The problem here is that when $\sigma$ changes, the interval $\left[ \frac{u}{\sigma} , \frac{u+\frac{2}{n}}{\sigma} \right]$  also moves, so the continuity of the measure $\mathbb{P}$ cannot be exploited. Therefore, for any $(\sigma_m) \subset K$ such that $\sigma_m \downarrow \sigma \quad (m \rightarrow \infty)$, we shall show that for any $m\in \mathbb{N}$ there exists a fixed point that in $\left[ \frac{u}{\sigma_m} , \frac{u+\frac{2}{n}}{\sigma_m} \right]$.
For $(\sigma_m) \in K$ such that $\sigma_m \downarrow \sigma\quad(m\rightarrow \infty)$, take $\varepsilon > 0$ satisfying $\varepsilon < \frac{\frac{2}{n \sigma}}{2u-\frac{2}{n}}$ and $N \in \mathbb{N}$ large enough to satisfy $\left| \frac{1}{\sigma_N} -\frac{1}{\sigma} \right| < \varepsilon$. Then, it holds that
\begin{align*}
\frac{u}{\sigma_N} < u\left(\frac{1}{\sigma} + \varepsilon\right)< \left( u + \frac{2}{n} \right) \left(\frac{1}{\sigma}- \varepsilon \right)<\frac{u+\frac{2}{n}}{\sigma_N}.
\end{align*}
from $\left(2u+\frac{2}{n} \right) \varepsilon < \frac{2}{n \sigma}$ and $\left| \frac{1}{\sigma_N} -\frac{1}{\sigma} \right| < \varepsilon$. Therefore, we can show that $h_n^1(\sigma)$ is right-continuous with respect to $\sigma$ because we can take $a:=u\left(\frac{1}{\sigma}+\varepsilon\right)$ independent of $N \in \mathbb{N}$ such that $\frac{u}{\sigma_N} <a <\frac{u+\frac{2}{n}}{\sigma_N}$. In the same way, we can show the case of $\sigma_m \uparrow \sigma\quad(m \rightarrow \infty)$, so we obetain the continuity of $h_n^1(\sigma)$ and $h_n^2(\sigma)$. Thus, $(\ref{main result2})$  converges to 0 as $n\rightarrow \infty$, and this proof is complete.
\end{proof}

\ \vspace{3mm}\\
\begin{flushleft}
{\bf\large Acknowledgements.}
The authors express sincere thanks to Prof. A. Kohats-Higa for the valuable discussion related to the part of Malliavin calculus. 
%The author also thanks anonymous referees for detailed suggestions and proposals that make the paper improve extensively.  
This research was partially supported by JSPS KAKENHI Grant-in-Aid for Scientific Research (C)  \#21K03358. 
\end{flushleft}

\end{document}